\newcommand{\modspacing}{%
  \setstretch{1.18}
}
\newcommand*\underdot[1]{%
  \underaccent{\dot}{#1}}
\DeclareMathAlphabet{\mathcal}{OMS}{cmsy}{m}{n} 
\newtheorem{thm}{Theorem}[section]
\newtheorem{lem}[thm]{Lemma}
\newtheorem{prop}[thm]{Proposition}
\theoremstyle{definition}
\newtheorem{defn}{Definition}[section]
\title{Arbitrary-length analogs to de Bruijn sequences}
\author{\href{https://orcid.org/0000-0001-8145-1484}{\includegraphics[scale=0.06]{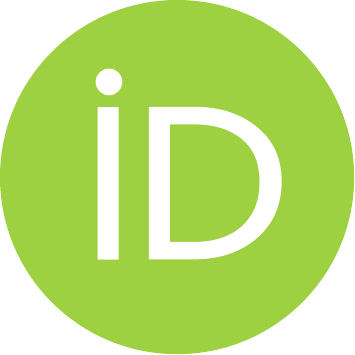}\hspace{1mm}Abhinav Nellore} \\
	Oregon Health \& Science University \\
	Portland, Oregon 97239, USA \\
	\texttt{anellore@gmail.com} \\
	\And
	\href{https://orcid.org/0000-0001-7651-089X}{\includegraphics[scale=0.06]{orcid.pdf}\hspace{1mm}Rachel Ward} \\
	The University of Texas at Austin\\
	Austin, Texas 78712, USA \\
	\texttt{rward@math.utexas.edu} \\
}
\begin{document}
\modspacing
\maketitle

\begin{abstract}
Let $\widetilde{\alpha}$ be a length-$L$ cyclic sequence of characters from a size-$K$ alphabet $\mathcal{A}$ such that the number of occurrences of any length-$m$ string on $\mathcal{A}$ as a substring of $\widetilde{\alpha}$ is $\lfloor L / K^m \rfloor$ or $\lceil L / K^m \rceil$. When $L = K^N$ for any positive integer $N$, $\widetilde{\alpha}$ is a de Bruijn sequence of order $N$, and when $L \neq K^N$, $\widetilde{\alpha}$ shares many properties with de Bruijn sequences. We describe an algorithm that outputs some $\widetilde{\alpha}$ for any combination of $K \geq 2$ and $L \geq 1$ in $O(L)$ time using $O(L \log K)$ space. This algorithm extends Lempel's recursive construction of a binary de Bruijn sequence. An implementation written in Python is available at \url{https://github.com/nelloreward/pkl}.
\end{abstract}

\section{Introduction}
\label{sec:intro}

\subsection{Preliminaries}

This paper is concerned with necklaces, otherwise known as circular strings or circular words. A \textit{necklace} is a cyclic sequence of characters; each character has a direct predecessor and a direct successor, but no character begins or ends the sequence. So if $101$ is said to be a necklace, $011$ and $110$ refer to the same necklace. In the remainder of this paper, the term \textit{string} exclusively refers to a sequence of characters with a first character and a last character. A \textit{substring} of a necklace is a string of contiguous characters whose length does not exceed the necklace's length. So the set of length-$2$ substrings of the necklace $101$ is $\{10, 01, 11\}$. A \textit{rotation} of a necklace is a substring whose length is precisely the necklace's length, and a \textit{prefix} of a string is any substring starting at the string's first character. So $101$ can be called a rotation of a necklace, and $10$ is a prefix of that rotation.

A \textit{de Bruijn sequence} of order $N$ on a size-$K$ alphabet $\mathcal{A}$ is a length-$K^N$ necklace that includes every possible length-$N$ string on $\mathcal{A}$ as a substring \cite{saint1894solution, de1946combinatorial, de1951circuits, de1975acknowledgement}. There are $(K!)^{K^{N-1}}/K^N$ distinct de Bruijn sequences of order $N$ on $\mathcal{A}$ \cite{de1951circuits}. (See the \hyperref[sec:appendix]{appendix} for a brief summary of the curious history of de Bruijn sequences.) An example for $\mathcal{A} = \{0, 1\}$ and $N = 4$ is the length-$16$ necklace
\begin{equation*}
    0000110101111001\,.
\end{equation*}
A de Bruijn sequence of order $N$ on $\mathcal{A}$ is optimally short in the sense that its length is $K^N$, and there are $K^N$ possible length-$N$ strings on $\mathcal{A}$. But more is true: because any length-$m$ string on $\mathcal{A}$ is a prefix of each of $K^{N-m}$ strings on $\mathcal{A}$ when $m \leq N$, the sequence has precisely $K^{N-m}$ occurrences of that length-$m$ string as a substring. So in the example above, there are $8$ occurrences of $0$, $4$ occurrences of $00$, $2$ occurrences of $000$, and $1$ occurrence of $0000$. Note by symmetry, $K^{N-m}$ is also the expected number of occurrences of any length-$m$ string on $\mathcal{A}$ as a substring of a necklace of length $K^N$ formed by drawing each of its characters uniformly at random from $\mathcal{A}$. More generally, by symmetry, $L / K^m$ is the expected number of occurrences of any length-$m$ string on $\mathcal{A}$ for $m \leq L$ as a substring of a necklace of arbitrary length $L$ formed by drawing each of its characters uniformly at random from $\mathcal{A}$.

\subsection{\texorpdfstring{$P^{(K)}_L$}{P(K)L}-sequences}

Consider a necklace defined as follows.
\begin{defn}[$P^{(K)}_L$-sequence]\label{def:p}
A $P^{(K)}_L$-sequence is a length-$L$ necklace on a size-$K$ alphabet $\mathcal{A}$ such that the number of occurrences of any length-$m$ string on $\mathcal{A}$ for $m \leq L$ as a substring of the necklace is $\lfloor L/K^{m} \rfloor$ or $\lceil L/K^{m} \rceil$.
\end{defn}
This paper proves by construction that a $P^{(K)}_L$-sequence exists for any combination of $K \geq 2$ and $L \geq 1$, giving an algorithm for sequence generation that runs in $O(L)$ time using $O(L \log k)$ space.

When $L = K^N$ for any positive integer $N$, $\lfloor L/K^{m} \rfloor = \lceil L/K^{m} \rceil = K^{N-m}$ for $m \leq N$, and a $P^{(K)}_L$-sequence collapses to a de Bruijn sequence of order $N$. When $L \neq K^N$, a $P^{(K)}_L$-sequence is a natural interpolative generalization of a de Bruijn sequence: it is a necklace for which the number of occurrences of any length-$m$ string on $\mathcal{A}$ for $m \leq L$ as a substring differs by less than one from its expected value for a length-$L$ necklace formed by drawing each of its characters uniformly at random from $\mathcal{A}$. When this expected value is an integer, $\lfloor L/K^{m} \rfloor = \lceil L/K^{m} \rceil$, and the number of occurrences of any length-$m$ string on $\mathcal{A}$ as a substring of a given $P^{(K)}_L$-sequence is equal to the number of occurrences of any \textit{other} length-$m$ string on $\mathcal{A}$ as a substring of that sequence. When this expected value is not an integer, a $P^{(K)}_L$-sequence comes as close as it can to achieving the same end, as formalized in the proposition below.
\begin{prop}
Consider a $P^{(K)}_L$-sequence $\widetilde{\alpha}$. Load across length-$m$ strings on $\mathcal{A}$ for $m \leq L$ is balanced in $\widetilde{\alpha}$ as follows.
\begin{enumerate}
    \item When $L / K^m$ is an integer, each length-$m$ string on $\mathcal{A}$ occurs precisely $L / K^m$ times as a substring of $\widetilde{\alpha}$.
    \item When $L / K^m$ is not an integer, each of $L - K^m \lfloor L / K^m \rfloor$ length-$m$ strings on $\mathcal{A}$ occurs precisely $\lceil L/K^{m} \rceil$ times as a substring of $\widetilde{\alpha}$, and each of $K^m \lceil L / K^m \rceil - L$ length-$m$ strings on $\mathcal{A}$ occurs precisely $\lfloor L/K^{m} \rfloor$ times as a substring of $\widetilde{\alpha}$.
\end{enumerate}
\end{prop}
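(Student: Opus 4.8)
The plan is to prove the proposition by a short double-counting argument driven by the defining property of a $P^{(K)}_L$-sequence. The first step is the observation that, because $\widetilde{\alpha}$ is a cyclic sequence of length $L$, there are exactly $L$ positions at which a length-$m$ window can begin whenever $m \le L$, and each such window is an occurrence of exactly one length-$m$ string on $\mathcal{A}$. Hence the occurrence counts of the $K^m$ length-$m$ strings on $\mathcal{A}$ sum to $L$. This single identity, together with Definition~\ref{def:p}, pins down the distribution of counts.

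For part 1, when $L/K^m$ is an integer we have $\lfloor L/K^m \rfloor = \lceil L/K^m \rceil = L/K^m$, so Definition~\ref{def:p} forces every length-$m$ string to occur exactly $L/K^m$ times; this is consistent with the total-count identity since $K^m \cdot (L/K^m) = L$.

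For part 2, set $q = \lfloor L/K^m \rfloor$; since $L/K^m \notin \mathbb{Z}$ we have $\lceil L/K^m \rceil = q + 1$, and Definition~\ref{def:p} says every length-$m$ string on $\mathcal{A}$ occurs either $q$ or $q+1$ times. Let $a$ be the number of strings occurring $q+1$ times and $b$ the number occurring $q$ times. Then $a + b = K^m$, and by the total-count identity $a(q+1) + bq = L$. Solving this $2 \times 2$ linear system gives $a = L - K^m q = L - K^m\lfloor L/K^m\rfloor$ and $b = K^m - a = K^m(q+1) - L = K^m\lceil L/K^m\rceil - L$, which is exactly the claim.

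I do not expect a genuine obstacle; the proposition is essentially a pigeonhole bookkeeping consequence of the definition. The only points that warrant a little care are justifying that the number of length-$m$ substring occurrences in a length-$L$ necklace is exactly $L$ for every $m \le L$ — including the boundary case $m = L$, where the $L$ occurrences are the $L$ rotations counted with position — and checking that the part-2 formulas degrade gracefully when $K^m > L$ (then $q = 0$, and the statement correctly asserts that $a = L$ strings appear once while the remaining $K^m - L$ strings do not appear).
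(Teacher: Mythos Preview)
Your proof is correct and follows essentially the same approach as the paper: both set up the two equations $a+b=K^m$ and $a\lceil L/K^m\rceil + b\lfloor L/K^m\rfloor = L$ (with the roles of $a$ and $b$ swapped relative to the paper) and solve the resulting $2\times 2$ system. Your explicit justification that a length-$L$ necklace has exactly $L$ length-$m$ substring occurrences, and your remarks on the boundary cases $m=L$ and $K^m>L$, add useful detail that the paper leaves implicit.
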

\begin{proof}
Item 1 is manifestly true from $\lfloor L/K^{m} \rfloor = \lceil L/K^{m} \rceil$. To see why item 2 is true, consider the system of Diophantine equations
\begin{align}
\begin{aligned}
    a \lfloor L / K^m \rfloor +  b \lceil L / K^m \rceil &= L \\
    a + b &= K^m
\end{aligned}\quad.
\end{align}
Above, $a$ represents the number of length-$m$ strings on $\mathcal{A}$ for which there are $\lfloor L / K^m \rfloor$ occurrences each as a substring of $\widetilde{\alpha}$, and $b$ represents the number of length-$m$ strings on $\mathcal{A}$ for which there are $\lceil L / K^m \rceil$ occurrences each as a substring of $\widetilde{\alpha}$. The first equation says the total number of occurrences of strings as substrings of $\widetilde{\alpha}$ is $L$, and the second says there is a total of $K^m$ length-$m$ strings on $\mathcal{A}$. Note the equations hold only when $L / K^m$ is nonintegral---that is, $\lfloor L / K^m \rfloor + 1 = \lceil L / K^m \rceil$. In this case, it is easily verified the unique solution to the system is $a = K^m \lceil L / K^m \rceil - L$ and $b = L - K^m \lfloor L / K^m \rfloor$.
\end{proof}
An example for $\mathcal{A} = \{0, 1\}$ and $L = 12$ is the sequence
\begin{equation}\label{eq:pex}
    000110111001 \,.
\end{equation}
To see why, note $L / K^m$ for $L = 12$ and $K = 2$ is $6$ for $m=1$, $3$ for $m=2$, between $1$ and $2$ for $m=3$, and between $0$ and $1$ for any $m \geq 4$. Further, the sequence \eqref{eq:pex} contains, as a substring, precisely
\begin{enumerate}
    \item $6$ occurrences of each string in the set $\{0, 1\}$;
    \item $3$ occurrences of each string in the set $\{00, 01, 10, 11\}$;
    \item $2$ occurrences of each string in the set $\{001, 011, 100, 110\}$, which is of size $L - K^m \lfloor L / K^m \rfloor = 12 - 2^3 \lfloor 12 / 2^3 \rfloor = 4$, and $1$ occurrence of each string in the set $\{000, 010, 101, 111\}$, which is of size $K^m \lceil L / K^m \rceil - L = 2^3 \lceil 12 / 2^3 \rceil - 12 = 4$;
    \item $1$ occurrence of each string in the set $$M := \{0001, 0011, 0110, 1101, 1011, 0111, 1110, 1100, 1001, 0010, 0100, 1000\}\,,$$ which is of size $L - K^m \lfloor L / K^m \rfloor = 12 - 2^4 \lfloor 12 / 2^4 \rfloor = 12$, and $0$ occurrences of each of the set of length-$4$ strings on $\mathcal{A}$ not in $M$, which is of size $K^m \lceil L / K^m \rceil - L = 2^4 \lceil 12 / 2^4 \rceil - 12 = 4$; and
    \item $0$ or $1$ occurrences of any length-$m$ string for $4 < m \leq L$ due to item 4 above.
\end{enumerate}

\subsection{\texorpdfstring{$P^{(K)}_L$}{P(K)L}-sequences vs. other de Bruijn-like sequences}

Two other arbitrary-length generalizations of de Bruijn sequences have appeared in the literature:
\begin{enumerate}
\item What we call a \textit{Lempel-Radchenko sequence} is a length-$L$ necklace on a size-$K$ alphabet $\mathcal{A}$ such that every length-$\lceil \log_K L \rceil$ string on $\mathcal{A}$ has at most one occurrence as a substring of the necklace. As recounted by Yoeli in \cite{yoeli1963counting}, according to Radchenko and Filippov in \cite{radchenko1959shifting}, the existence of binary Lempel-Radchenko sequences of any length was first proved by Radchenko in his unpublished 1958 University of Leningrad PhD dissertation \cite{radchenko1958code}. Other binary-case existence proofs were furnished by (1) Yoeli himself in \cite{yoeli1961nonlinear} and \cite{yoeli1962binary}; (2) Bryant, Heath, and Killik in \cite{bryant1962counting} based on the work \cite{heath1961chain} of Heath and Gribble; and (3) Golomb, Welch, and Goldstein in \cite{golomb1959cycles}. Explicit constructions of arbitrary-length binary Lempel-Radchenko sequences were given by Etzion in 1986 \cite{etzion1986algorithma}. In brief, Etzion's approach is to join necklaces derived from the pure cycling register, potentially overshooting the target length $L$, and subsequently remove substrings as necessary in the resulting sequence according to specific rules to achieve the target length. This takes $o(\log L)$ time per bit generated and uses $O(L \log L)$ space.

The existence of Lempel-Radchenko sequences of any length for any alphabet size was proved in 1971 by Lempel in \cite{lempel1971m}. In the special case where the alphabet size is a power of a prime number, one of two approaches for sequence construction effective at any length $L$ may be used: either (1) pursue the algebraic construction described by Hemmati and Costello in their 1978 paper \cite{hemmati1978algebraic}, or (2) cut out a length-$L$ stretch of contiguous sequence from a de Bruijn sequence longer than $L$ generated by a linear feedback function, as described in Chapter 7, Section 5 of Golomb's text \cite{golomb2017shift}. In his 2000 paper \cite{landsberg2000feedback}, Landsberg built on Golomb's technique, explaining in the appendix how to use it to construct a Lempel-Radchenko sequence on an alphabet of arbitrary size. The idea is to decompose the desired alphabet size into a product of powers of pairwise-distinct primes, construct length-$L$ sequences on alphabets of sizes equal to factors in this product with Golomb's technique, and finally write a particular linear superposition of the sequences. The time and space requirements of Hemmati and Costello's construction, when optimized, have gone unstudied in the literature. In general, Golomb's technique gives a length-$L$ Lempel-Radchenko sequence in $O(L \log L)$ time using $O(\log L)$ space, and Landsberg's generalization multiplies these complexities by the number of factors in the prime power decomposition of the alphabet size. Etzion suggested in his 1986 paper \cite{etzion1986algorithma} that, using results from \cite{etzion1986algorithmb}, his algorithm generating a binary Lempel-Radchenko sequence could be extended to generate a Lempel-Radchenko sequence for any alphabet size, but he did not do so explicitly. It is reported on Joe Sawada's website \cite{sawadasite} that in their recent unpublished manuscript \cite{gundogan2021cut}, G\"undo\v{g}an, Sawada, and Cameron extend Etzion's construction to arbitrary alphabet sizes, streamlining it so it generates each character in $O(\log L)$ time using $O(\log L)$ space. Sawada's website further includes an implementation in C.

\item A \textit{generalized de Bruijn sequence} is a length-$L$ Lempel-Radchenko sequence on a size-$K$ alphabet $\mathcal{A}$ such that every length-$\lfloor \log_K L \rfloor$ string on $\mathcal{A}$ is a substring of the sequence. Generalized de Bruijn sequences were recently introduced by Gabric, Hollub, and Shallit in \cite{gabric2019generalized, gabric2021maximal}. These papers also prove generalized de Bruijn sequences exist for any combination of $L \geq 1$ and $K \geq 2$. No work to date has given explicit constructions of arbitrary-length generalized de Bruijn sequences.
\end{enumerate}

We prove the following.
\begin{thm}
A $P^{(K)}_L$-sequence is a generalized de Bruijn sequence and therefore also a Lempel-Radchenko sequence.
\end{thm}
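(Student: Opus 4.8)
The plan is to unwind the three definitions and observe that both conditions defining a generalized de Bruijn sequence drop out of the $P^{(K)}_L$-sequence property, applied at exactly two substring lengths: $m_1 := \lceil \log_K L \rceil$ and $m_0 := \lfloor \log_K L \rfloor$. Before invoking Definition~\ref{def:p} at these lengths I would record the trivial but necessary check that $m_1 \le L$ (hence also $m_0 \le L$), so that the definition has content there: since $K \ge 2$ we have $\log_K L \le \log_2 L < L$ for every $L \ge 1$ (because $2^L > L$), so $\lceil \log_K L \rceil \le L$.

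First I would establish the Lempel--Radchenko property. By construction $m_1$ satisfies $K^{m_1} \ge L$, so $0 < L/K^{m_1} \le 1$, which forces $\lfloor L/K^{m_1} \rfloor \in \{0,1\}$ and $\lceil L/K^{m_1} \rceil = 1$. Since $\widetilde{\alpha}$ is a $P^{(K)}_L$-sequence, the number of occurrences of any length-$m_1$ string on $\mathcal{A}$ as a substring of $\widetilde{\alpha}$ is $\lfloor L/K^{m_1} \rfloor$ or $\lceil L/K^{m_1} \rceil$, hence at most $1$. That is precisely the defining property of a length-$L$ Lempel--Radchenko sequence.

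Next I would establish the remaining condition for a generalized de Bruijn sequence, namely that every length-$m_0$ string on $\mathcal{A}$ occurs as a substring. Here $K^{m_0} \le L$, so $L/K^{m_0} \ge 1$ and therefore $\lfloor L/K^{m_0} \rfloor \ge 1$. The $P^{(K)}_L$-sequence property then guarantees that every length-$m_0$ string occurs at least $\lfloor L/K^{m_0} \rfloor \ge 1$ times, i.e.\ at least once. Combining the two preceding paragraphs, $\widetilde{\alpha}$ satisfies both defining conditions of a generalized de Bruijn sequence; and since, by definition, every generalized de Bruijn sequence is a Lempel--Radchenko sequence, the ``therefore'' clause of the theorem is then immediate.

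I do not anticipate a real obstacle: the argument is a one-line consequence of the $\lfloor\cdot\rfloor$/$\lceil\cdot\rceil$ bounds at $m=\lceil\log_K L\rceil$ and $m=\lfloor\log_K L\rfloor$. The only points meriting care are the boundary cases --- the degenerate $L=1$, and $L$ an exact power of $K$, where $m_0 = m_1$ and $L/K^{m_0}=1$, so that both conditions collapse to ``every length-$\log_K L$ string occurs exactly once,'' recovering the ordinary de Bruijn property --- together with the elementary check, noted above, that $\lceil\log_K L\rceil \le L$ so Definition~\ref{def:p} genuinely applies at that length.
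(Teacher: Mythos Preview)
Your proposal is correct and is essentially the same argument as the paper's: both apply Definition~\ref{def:p} at $m=\lceil\log_K L\rceil$ to get the at-most-one (Lempel--Radchenko) condition and at $m=\lfloor\log_K L\rfloor$ to get the at-least-one condition. The only difference is cosmetic---the paper records the explicit values of the floors and ceilings case-by-case, while you bound them---and your added sanity check that $\lceil\log_K L\rceil\le L$ (so Definition~\ref{def:p} actually applies at that length) is a detail the paper silently assumes.
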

\begin{proof}
Let $\widetilde{\alpha}$ be a $P^{(K)}_L$-sequence. The proposition is true if and only if
\begin{enumerate}
    \item every length-$\lceil \log_K L \rceil$ substring of $\widetilde{\alpha}$ occurs precisely once in the sequence, and
    \item every length-$\lfloor \log_K L \rfloor$ string on $\mathcal{A}$ is a substring of $\widetilde{\alpha}$.
\end{enumerate}
Item 1 is true because from Definition \ref{def:p}, $\widetilde{\alpha}$ has
\begin{equation*}
\lfloor L / K^{\lceil \log_K L \rceil} \rfloor =
\begin{cases}
1 & \mbox{when } \log_K L \mbox{ is an integer} \\
0 & \mbox{otherwise} \\
\end{cases}
\end{equation*}
or $\lceil L / K^{\lceil \log_K L \rceil} \rceil = 1$ occurrences of any length-$\lceil \log_K L \rceil$ string on $\mathcal{A}$ as a substring. Item 2 is true because from Definition \ref{def:p}, $\widetilde{\alpha}$ has
\begin{equation*}
\lceil L / K^{\lfloor \log_K L \rfloor} \rceil =
\begin{cases}
1 & \mbox{when } \log_K L \mbox{ is an integer} \\
k & \mbox{otherwise for }k \in \{2, \ldots, K\} \\
\end{cases}
\end{equation*} 
or $\lfloor L / K^{\lfloor \log_K L \rfloor} \rfloor \in \{1, \ldots, K-1\}$ occurrences of any length-$\lfloor \log_K L \rfloor$ string on $\mathcal{A}$ as a substring.
\end{proof}
$P^{(K)}_L$-sequences are more tightly constrained than generalized de Bruijn sequences and Lempel-Radchenko sequences. A length-$L$ Lempel-Radchenko sequence imposes no requirements regarding presence or absence of particular strings as substrings; it simply requires that the number of distinct length-$\lceil \log_K L \rceil$ substrings is $L$. A length-$L$ generalized de Bruijn sequence on $\mathcal{A}$ goes a step further, requiring not only this distinctness, but also the presence of every string on $\mathcal{A}$ smaller than $\lceil \log_K L \rceil$ as a substring. A $P^{(K)}_L$-sequence goes yet another step further, requiring not only this presence, but also specific incidences of strings as substrings that, as best as they can, try not to bias the sequence toward inclusion of any one length-$m$ string over another. This requirement makes $P^{(K)}_L$-sequences, in general, more de Bruijn-like than Lempel-Radchenko sequences and generalized de Bruijn sequences.

An example (borrowed from \cite{gabric2021maximal}) of a Lempel-Radchenko sequence that is not a generalized de Bruijn sequence and therefore also not a $P^{(K)}_L$-sequence for $\mathcal{A} = \{0, 1\}$ and $L = 11$ is
\begin{equation}\label{eq:shiftex}
10011110000\,.
\end{equation}
In this case, $\lceil \log_K L \rceil = \lceil \log_2 11 \rceil = 4$, and indeed, there is precisely one occurrence in \eqref{eq:shiftex} of every length-$4$ substring of \eqref{eq:shiftex}. But $\lfloor \log_K L \rfloor = \lfloor \log_2 11 \rfloor = 3$, and in \eqref{eq:shiftex} just $7$ of $8$ length-$3$ strings on $\mathcal{A}$ occur as substrings; the sequence is missing $101$. An example of a generalized de Bruijn sequence that is not a $P^{(K)}_L$-sequence for $\mathcal{A} = \{0, 1, 2\}$ and $L = 12$ is
\begin{equation}\label{eq:genex}
    000111101011\,.
\end{equation}
Again, $\lceil \log_K L \rceil = \lceil \log_2 12 \rceil = 4$ and $\lfloor \log_K L \rfloor = \lfloor \log_2 12 \rfloor = 3$. Now, not only is there precisely one occurrence in \eqref{eq:genex} of every length-$4$ substring of \eqref{eq:genex}, but also all $8$ length-$3$ strings on $\mathcal{A}$ occur as substrings. However, \eqref{eq:genex} should have $\lceil L / K \rceil = \lfloor L / K \rfloor = 12 / 2 = 6$ occurrences of each of $1$ and $0$ as substrings to be a $P^{(K)}_L$-sequence, and it has $5$ occurrences of $0$ and $7$ occurrences of $1$. This imbalance of $0$s and $1$s leads to further violations of constraints on $P^{(K)}_L$-sequences at other substring lengths. Another example of a generalized de Bruijn sequence that is not a $P^{(K)}_L$-sequence, this time on the nonbinary alphabet $\mathcal{A} = \{0, 1, 2\}$ and for $L = 20$, is
\begin{equation}\label{eq:landsex}
    02220010121120111002\,.
\end{equation}
(This sequence was constructed by Landsberg in \cite{landsberg2000feedback} using Golomb's technique from \cite{golomb2017shift} as an example of a Lempel-Radchenko sequence.) Note $\lceil \log_K L \rceil = \lceil \log_3 20 \rceil = 3$ and $\lfloor \log_K L \rfloor = \lfloor \log_3 20 \rfloor = 2$, every length-$3$ substring occurs exactly once, and every length-$2$ string on $\mathcal{A}$ is present as a substring. But \eqref{eq:landsex} should have, for $m = 2$, precisely $\lceil L / K^m \rceil = 20 / 3^2 = 3$ or $\lfloor L / K^m \rfloor = 20 / 3^2 = 2$ occurrences of every length-$2$ string on $\mathcal{A}$ as a substring, and there is only $1$ occurrence of $21$ as a substring of \eqref{eq:landsex}.

\subsection{de Bruijn sequence constructions vs. de Bruijn-like sequence constructions}

Unlike the current situation with de Bruijn-like sequences of arbitrary length, there is a veritable cornucopia of elegant constructions of de Bruijn sequences. Excellent summaries of many of these are provided on Sawada's website \cite{sawadasite}. They include
\begin{enumerate}
    \item greedy constructions. Prominent examples are the prefer-largest/prefer-smallest \cite{martin1934problem}, prefer-same \cite{eldert1958shifting, fredricksen1982survey, alhakim2021revisiting}, and prefer-opposite \cite{alhakim2010simple} algorithms;
    \item shift rules. A shift rule maps a length-$N$ substring of a de Bruijn sequence of order $N$ to the next length-$N$ substring of the sequence. Shift rules are often simple, economical, and efficient; examples generating each character of a de Bruijn sequence in amortized constant time using $O(N)$ space are \cite{sawada2016surprisingly, huang1990new, etzion1984algorithms, fredricksen1975class} in the binary case and \cite{sawada2017simple} in the $K$-ary case. See \cite{amram2019efficient, jansen1991efficient, gabric2018framework, gabric2019successor, yang1992construction, chang2019general, zhu2021efficiently} for other specific rules;
    \item concatenation rules. The best-known example, obtained by Fredricksen and Maiorana in 1978 \cite{fredricksen1978necklaces}, joins all Lyndon words on an ordered alphabet of size $K$ whose lengths divide the desired order $N$ in lexicographic order to form the lexicographically smallest (i.e., ``granddaddy'') de Bruijn sequence of that order on that alphabet. (Also see \cite{ford1957cyclic} for Ford's independent work generating this sequence.) The sequence is obtained in amortized constant time per character using $O(N)$ space with the efficient Lyndon word generation approach of Ruskey, Savage, and Wang \cite{ruskey1992generating}, which builds on Fredricksen, Kessler, and Maiorana's papers \cite{fredricksen1978necklaces, fredricksen1986algorithm}. Dragon, Hernandez, Sawada, Williams, and Wong recently discovered that joining the Lyndon words in colexicographic order instead also outputs a particular de Bruijn sequence, the ``grandmama'' sequence \cite{dragon2016grandmama, dragon2018constructing}. A generic concatenation approach using colexicographic order is developed in \cite{gabric2017bruijn, gabric2018constructing};
    \item recursive constructions. Broadly, these approaches are based on transforming a de Bruijn sequence into a de Bruijn sequence of higher order, where the transformation can be implemented recursively. They fall into two principal classes:
    \begin{enumerate}
    \item the constructions of Mitchell, Etzion, and Paterson in \cite{mitchell1996method}, which interleave punctured and padded variants of a binary de Bruijn sequence of order $N$ and modify the result slightly to obtain a binary de Bruijn sequence of order $2N$. If starting with a known binary de Bruijn sequence, this process takes amortized $O(1)$ time per output bit while using $O(1)$ additional space. The constructions are notable for being efficiently decodable---that is, the position of any given string on $\mathcal{A}$ occurring exactly once in the sequence as a substring can be retrieved in time polynomial in $N$;
    \item constructions based on Lempel's D-morphism (otherwise known as Lempel's homomorphism) \cite{lempel1970homomorphism}, whose inverse lifts any length-$L$ necklace $\widetilde{\beta}$ on a size-$K$ alphabet $\mathcal{A}$ to up to $K$ necklaces on $\mathcal{A}$. When $\widetilde{\beta}$ is a de Bruijn sequence of order $N$, the necklaces to which it is lifted may be joined to form a de Bruijn sequence of order $N+1$. Efficient implementations constructing binary de Bruijn sequences of arbitrary order by repeated application of Lempel's D-morphism are given by Annexstein \cite{annexstein1997generating} as well as Chang, Park, Kim, and Song \cite{chang1999efficient}; in general, a length-$L$ binary de Bruijn sequence is generated in $O(L)$ time using $O(L)$ space. Lempel confined attention to the binary case in \cite{lempel1970homomorphism}. An extension to alphabets of arbitrary size was first written by Ronse in \cite{ronse1984feedback} and also developed by Tuliani in \cite{tuliani2001bruijn}; it was further generalized by Alhakim and Akinwande in \cite{alhakim2011recursive}. See \cite{games1983generalized, alhakim2017stretching} for other generalizations as well as \cite{tuliani2001bruijn} for a decodable de Bruijn sequence construction exploiting both interleaving and Lempel's D-morphism.
    \end{enumerate}
\end{enumerate}

It is possible construction techniques for de Bruijn sequences have been more easily uncovered than for their arbitrary-length cousins as traditionally defined precisely because de Bruijn sequences are more tightly constrained. But $P^{(K)}_L$-sequences are similarly constrained.

\subsection{Our contribution}

This paper introduces the concept of $P^{(K)}_L$-sequences. Further, it extends recursive de Bruijn sequence constructions based on Lempel's D-morphism \cite{lempel1970homomorphism, ronse1984feedback, tuliani2001bruijn, alhakim2011recursive}, giving an algorithm that outputs a $P^{(K)}_L$-sequence on the alphabet $\{0, \ldots, K - 1\}$ for any combination of $L \geq 1$ and $K \geq 2$ in $O(L)$ time using $O(L \log K)$ space. The essence of our approach is to lengthen each of $d_i$ longest runs of the same nonzero character by a single character at the $i$th step before lifting, where the $\{ d_i \}$ are the digits of the desired length $L$ of the $P^{(K)}_L$-sequence when expressed in base $K$---that is, for $L = \sum_{i=0}^{\lfloor \log_K L \rfloor} d_i K^{\lfloor \log_K L \rfloor - i}$. Finally, this paper is accompanied by Python code at \url{https://github.com/nelloreward/pkl} implementing our algorithm.

We were motivated to study arbitrary-length generalizations of de Bruijn sequences by \cite{nellore2021invertible}, which introduces nength, an analog to the Burrows-Wheeler transform \cite{burrows1994block} for offline string matching in labeled digraphs. In a step preceding the transform, a digraph with edges labeled on one alphabet is augmented with a directed cycle that (1) includes every vertex of the graph and (2) matches a de Bruijn-like sequence on a different alphabet. This vests each vertex with a unique tag along the cycle. But if the de Bruijn-like sequence is an arbitrary Lempel-Radchenko or generalized de Bruijn sequence, some vertices may be significantly more identifiable than others when locating matches to query strings in the graph using its nength, biasing performance. So in general, it is reasonable to arrange that the directed cycle matches a $P^{(K)}_L$-sequence, which distributes identifiability across vertices as evenly as possible.

The remainder of this paper is organized as follows. The next section develops our algorithm for generating \mbox{$P^{(K)}_L$-sequences}, proving space and performance guarantees. The third and final section lists some open questions.

\section{Generating \texorpdfstring{$P^{(K)}_L$}{P(K)L}-sequences}

\subsection{Additional notation and conventions}\label{sec:notation}

In the development that follows, necklaces are represented by lowercase Greek letters adorned with tildes such as $\widetilde{\beta}$ and $\widetilde{\gamma}$, and strings are represented by unadorned lowercase Greek letters such as $\omega$ and $\xi$. A necklace or string's length or a set's size is denoted using $| \cdot |$. So $|\widetilde{\beta}|$ is the length of the necklace $\widetilde{\beta}$, and $|V|$ is the size of the set $V$. Necklaces and strings may be in indexed families, where for example in $\widetilde{\beta}_i$, $i$ specifies the family member. Further, a necklace or string may be written as a function of another necklace or string. So $\omega(\widetilde{\beta})$ denotes that the string $\omega$ is a function of the necklace $\widetilde{\beta}$. When any function's argument is clear from context, that argument may be dropped with prior warning. So $\omega(\widetilde{\beta})$ may be written as, simply, $\omega$.

The operation of \textit{joining} two necklaces $\widetilde{\beta}$ and $\widetilde{\gamma}$ at a string $\omega$ to form a new necklace $\widetilde{\lambda}$ refers to cycle joining, described in Chapter 6 of Golomb's text \cite{golomb2017shift}. $\widetilde{\lambda}$ is obtained by concatenating rotations of $\widetilde{\beta}$ and $\widetilde{\gamma}$ that share the prefix $\omega$. So if $\widetilde{\beta} = 00101101$ and $\widetilde{\gamma} = 0110001$, joining $\widetilde{\beta}$ and $\widetilde{\gamma}$ at $110$ gives $\widetilde{\lambda} = 110100101100010$. There may be more than one occurrence of $\omega$ as a substring of at least one of $\widetilde{\beta}$ and $\widetilde{\gamma}$, so there may be more than one way to join them at $\omega$. Any way is permitted in such a case. Note joining $\widetilde{\beta}$ and $\widetilde{\gamma}$ at $\omega$ preserves length-$m$ substring occurrence frequencies for $m \leq |\omega|+1$.

For any positive integer $j$,
$$
[j] := \{0, 1, \ldots, j - 1\}\,.
$$
While results are obtained for sequences on the alphabet $[K]$ here, they may be translated to any size-$K$ alphabet $\mathcal{A}$ by appropriate substitution of characters. When a string or necklace is initially declared to be on the alphabet $[K]$, but an expression $y$ for one of its characters is written such that $y \notin [K]$, that character should be interpreted as $y - K \lfloor y / K\rfloor$. This is simply the remainder of floored division of $y$ by $K$. Put another way, expressions for characters of strings on $[K]$ respect arithmetic modulo $K$. For example, if the first character of a string on the alphabet $[2] = \{0, 1\}$ is specified as an expression that equals $9$, that character is $1$.

Individual characters comprising strings are often expressed in terms of variables, so a necklace or string may be written as a comma-separated list of characters enclosed by parentheses, where in the necklace case, $\circlearrowright$ is included as a subscript. For example, for $i = 3$, if $(i, i + 1, 0, 1)$ is said to be on the alphabet $[4]$, it is the string $3001$, while if $(i, i + 1, 0, 1)_\circlearrowright$ is said to be on $[4]$, it is the necklace $3001$. Bracket notation is used to refer to a specific character of a string or necklace. So $\omega[i]$ refers to the character at index $i$ of $\omega$. Further, characters of a string are indexed in order, so $\omega[i+1]$ appears directly after $\omega[i]$ in $\omega$. $\omega[0]$ and $\omega[|\omega| - 1]$ refer, respectively, to the first and last characters of the string $\omega$. For a necklace, the choice of the character at index $0$ is arbitrary, but in a parenthetical representation of that necklace, the character at index $0$ always comes first. So an arbitrary length-$L$ necklace $\widetilde{\beta}$ always equals
$$
\widetilde{\beta} = (\widetilde{\beta}[0], \widetilde{\beta}[1], \ldots, \widetilde{\beta}[L - 1])_\circlearrowright
$$
but not necessarily
$$
\widetilde{\beta} = (\widetilde{\beta}[1], \widetilde{\beta}[2], \ldots, \widetilde{\beta}[L - 1], \widetilde{\beta}[0])_\circlearrowright\,.
$$
A valid character index of a string $\omega$ is confined to $[|\omega|]$, but a valid character index of a length-$L$ necklace $\widetilde{\beta}$ is any integer $j$, with the stipulation
$$
\widetilde{\beta}[j] = \widetilde{\beta}[j + L]\,.
$$
A string or necklace on $[K]$ can be summed with any integer by adding that integer to each of its characters modulo $K$. So for an integer $j$ and a length-$L$ necklace $\widetilde{\beta}$,
$$
    \widetilde{\beta} + j = j + (\widetilde{\beta}[0], \widetilde{\beta}[1], \ldots, \widetilde{\beta}[L - 1])_\circlearrowright = (\widetilde{\beta}[0] + j, \widetilde{\beta}[1] + j, \ldots, \widetilde{\beta}[L - 1] + j)_\circlearrowright\,.
$$

Finally, $\mathbf{i}_{m}$ is used as a shorthand for the length-$m$ string $(i, i, \ldots, i)$, $\mathbf{i}_{m}^{++}$ is used as a shorthand for the length-$m$ string $(i, i+1, \ldots, i+m-1)$, and  $\widetilde{\mathbf{i}}_{m}^{++}$ is used as a shorthand for the length-$m$ necklace $(i, i+1, \ldots, i+m-1)_\circlearrowright$. In a slight abuse of notation, a variable representing a string such as $\omega$, $\mathbf{i}_m$, or $\mathbf{i}_{m}^{++}$ can take the place of a character in a parenthetical representation of a string or necklace. So if $(\mathbf{2}_{6}^{++}, 3)$ is said to be a substring of a string on the alphabet $[4]$, that substring is $2301233$.

\subsection{Lempel's lift}\label{sec:lempelslift}
Lempel's lift, defined below, realizes the simplest $K$-ary version of Lempel's D-morphism \cite{lempel1970homomorphism, ronse1984feedback, tuliani2001bruijn, alhakim2011recursive} in inverse form.
\begin{defn}[Lempel's lift]
\label{def:lempelslift}
Consider a length-$L$ necklace $\widetilde{\beta}$ on the alphabet $[K]$. Lempel's lift of $\widetilde{\beta}$, denoted by $\{\widetilde{\lambda}_i(\widetilde{\beta})\}$, is the indexed family of necklaces on $[K]$ specified by
\begin{equation}\label{eq:lambdadef}
\widetilde{\lambda}_i(\widetilde{\beta}) = i + \left(\widetilde{\beta}[0], \widetilde{\beta}[0] + \widetilde{\beta}[1], \ldots, \sum_{j=0}^{d(\widetilde{\beta})\cdot L - 1} \widetilde{\beta}[j]\right)_\circlearrowright \qquad i \in [p(\widetilde{\beta})]\,.
\end{equation}
Above, $d(\widetilde{\beta})$ is the smallest positive integer such that $ \left(\sum_{j=0}^{L-1} \widetilde{\beta}[j]\right) \cdot d(\widetilde{\beta})$ is divisible by $K$, and $p(\widetilde{\beta}) = K / d(\widetilde{\beta})$.
\end{defn}
The remainder of this subsection (i.e., Section~\ref{sec:lempelslift}) abbreviates functions of $\widetilde{\beta}$ given above by dropping it as an argument. For example, $p$ is written rather than $p(\widetilde{\beta})$.

Observe that $\widetilde{\lambda}_i$ is a discrete integral of $\widetilde{\beta}$, with $i \in [p]$ the constant of integration. The number $d$ specified in Definition~\ref{def:lempelslift} is the smallest positive integer $q \in \{1,2,\dots, K\}$ such that integrating a cycle of $\widetilde{\beta}$ a total of $q$ times gives a cycle of $\widetilde{\lambda}_i$. Conversely, $\widetilde{\beta}$ is uniquely determined by a discrete derivative of $\widetilde{\lambda}_i$, which eliminates the constant of integration:
$$
\widetilde{\beta}^d = \left(\lambda_i[1] - \lambda_i[0], \lambda_i[2] - \lambda_i[1], \ldots, \lambda_i[d\cdot L - 1] - \lambda_i[d \cdot L - 2], \lambda_i[0] - \lambda_i[d\cdot L - 1]\right)_\circlearrowright \quad i \in [p]\,.
$$
Above, the power $d$ on the LHS denotes $\widetilde{\beta}$ is concatenated with itself $d$ times.

Note the sum of the lengths of the necklaces comprising Lempel's lift of $\widetilde{\beta}$ is $K \cdot L$. Other properties of the lift pertinent to constructing $P^{(K)}_L$-sequences are as follows.

\begin{lem}\label{lem:occurrences}
Suppose $\widetilde{\beta}$ is a length-$L$ necklace on the alphabet $[K]$, and $m$ is an integer satisfying $0 \leq m < L$. Suppose $\omega$ is a length-$m$ string on $[K]$, and $\xi_\ell$ is the length-$(m+1)$ string on $[K]$ given by
\begin{equation}\label{eq:xidef}
    \xi_\ell = \ell + \left(0, \omega[0], \omega[0] + \omega[1], \ldots, \sum_{j=0}^{m - 1} \omega[j]\right) \qquad \ell \in [K]\,.
\end{equation}
Then $\omega$ occurs $t$ times as a substring of $\widetilde{\beta}$ if and only if $\xi_\ell$ occurs $t$ times as a substring of the necklaces comprising Lempel's lift of $\widetilde{\beta}$. When $m=0$, $\omega$ is the length-$0$ string occurring as a substring at every character of $\widetilde{\beta}$.
\end{lem}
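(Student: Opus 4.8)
The plan is to fix $\ell \in [K]$ and construct an explicit bijection between the occurrences of $\omega$ as a substring of $\widetilde{\beta}$ and the occurrences of $\xi_\ell$ as a substring of the necklaces $\{\widetilde{\lambda}_i\}_{i \in [p]}$; matching the counts then gives the claim for every $\ell$ at once. Write $S := \sum_{j=0}^{L-1}\widetilde{\beta}[j]$, so that by definition $d$ is the least positive integer with $dS \equiv 0 \Mod{K}$ and $p = K/d$. The first thing I would record is the discrete ``chain rule'' $\widetilde{\lambda}_i[n+1] - \widetilde{\lambda}_i[n] = \widetilde{\beta}[n+1]$, valid for \emph{every} integer $n$: it is immediate from \eqref{eq:lambdadef} for $0 \le n \le dL-2$, it survives the wraparound at $n = dL-1$ because $\widetilde{\lambda}_i[dL] - \widetilde{\lambda}_i[dL-1] = \widetilde{\beta}[0] - dS \equiv \widetilde{\beta}[0] \Mod{K}$, and it then propagates to all $n$ by $dL$-periodicity of $\widetilde{\lambda}_i$ and $L$-periodicity of $\widetilde{\beta}$. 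Equivalently, $\widetilde{\lambda}_i[n] \equiv i + \sum_{j=0}^{n}\widetilde{\beta}[j] \Mod{K}$ for all integers $n$.

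For the forward direction, suppose $\xi_\ell$ occurs in $\widetilde{\lambda}_i$ starting at index $n$, meaning $\widetilde{\lambda}_i[n+k] = \xi_\ell[k]$ for $0 \le k \le m$. Differencing consecutive characters and invoking the chain rule and \eqref{eq:xidef}, we get $\widetilde{\beta}[n+k] = \xi_\ell[k] - \xi_\ell[k-1] = \omega[k-1]$ for $1 \le k \le m$, i.e.\ $\omega$ occurs in $\widetilde{\beta}$ starting at index $n+1$. This defines a map $\Phi$ from occurrences of $\xi_\ell$ in the lift to occurrences of $\omega$ in $\widetilde{\beta}$, and it suffices to prove every fiber of $\Phi$ is a singleton.

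So I would fix an occurrence of $\omega$ in $\widetilde{\beta}$ at index $s$ and run the chain rule in reverse: for any index $n$ with $n \equiv s-1 \Mod{L}$ one gets $\widetilde{\lambda}_i[n+k] - \widetilde{\lambda}_i[n] = \sum_{j=0}^{k-1}\widetilde{\beta}[s+j] = \sum_{j=0}^{k-1}\omega[j] = \xi_\ell[k] - \ell$ for $0 \le k \le m$, so such an $n$ yields an occurrence of $\xi_\ell$ in $\widetilde{\lambda}_i$ (automatically sent to our occurrence of $\omega$ by $\Phi$) exactly when $\widetilde{\lambda}_i[n] = \ell$. For each $i \in [p]$ the admissible $n \in \{0,\dots,dL-1\}$ are the $d$ values $n = s-1+rL$ with $r$ ranging over $d$ consecutive integers, and for these $\widetilde{\lambda}_i[n] \equiv i + \sum_{j=0}^{s-1}\widetilde{\beta}[j] + rS \Mod{K}$. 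Hence the fiber over this occurrence of $\omega$ has size $\#\{(i,r) \in [p]\times[d] : i + rS \equiv \ell - \sum_{j=0}^{s-1}\widetilde{\beta}[j] \Mod{K}\}$, the precise range of $r$ being irrelevant since $rS \bmod K$ has period $d$ in $r$. The crux is that $(i,r) \mapsto i + rS \bmod K$ is a bijection $[p]\times[d] \to [K]$: minimality of $d$ makes it the additive order of $S$ in $\mathbb{Z}/K\mathbb{Z}$, forcing $p = K/d = \gcd(K,S)$, so $\langle S\rangle = \langle p\rangle = \{0,p,2p,\dots,(d-1)p\}$; as $r$ runs over $d$ consecutive values $rS$ hits each element of this subgroup exactly once, and translating $rS = tp$ by $i \in \{0,\dots,p-1\}$ sweeps out the block $\{tp,\dots,tp+p-1\}$, the $d$ blocks tiling $[K]$ since $dp = K$. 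Thus each fiber has exactly one point.

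Consequently $\Phi$ is a bijection, so $\xi_\ell$ and $\omega$ occur equally often; the $m=0$ case is the same computation, with the length-$0$ string $\omega$ occurring at each of the $L$ indices of $\widetilde{\beta}$ and $\xi_\ell = (\ell)$. I expect the bijection $(i,r)\mapsto i+rS$, and in particular the deduction $p = \gcd(K,S)$ from the minimality hypothesis on $d$, to be the one genuinely load-bearing step; everything else is careful bookkeeping with indices taken modulo $L$, $dL$, and $K$, the only delicate point there being the wraparound in the chain rule, which is exactly what the choice of $d$ is engineered to make work.
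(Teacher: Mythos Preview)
Your proof is correct and follows essentially the same approach as the paper's: both set up a bijection between occurrences of $\omega$ in $\widetilde{\beta}$ and occurrences of $\xi_\ell$ in the lift by integrating/differentiating and tracking how the starting value shifts by $S$ with each full pass through $\widetilde{\beta}$. Your version is more explicit about the load-bearing combinatorial step---that $(i,r)\mapsto i+rS \bmod K$ is a bijection $[p]\times[d]\to[K]$---which the paper only sketches via the remark that the $\xi_{y+qp}$ for $q\in[d]$ live in $\widetilde{\lambda}_i$ and the remaining translates live in the other $\widetilde{\lambda}_j$.
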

\begin{proof}
Start constructing a given $\widetilde{\lambda}_i$ by integrating $\widetilde{\beta}$ from its character index $0$ up to character index $w < L$. If $\omega$ occurs as a substring of $\widetilde{\beta}$ at index $w$, it follows from \eqref{eq:lambdadef} and \eqref{eq:xidef} that $\xi_y$ occurs as a substring of $\widetilde{\lambda}_i$ at its character index $w - 1$ for some $y \in [K]$, and vice versa. For $d > 1$, continue integrating $\widetilde{\beta}$ past its character index $w$ for another $L$ characters to encounter $\omega$ again. This time, how $p$ is defined in terms of the sum of $\widetilde{\beta}$'s characters implies $\omega$'s presence as a substring of $\widetilde{\beta}$ at index $w$ is a necessary and sufficient condition for $\xi_{y + p}$'s presence as a substring of $\widetilde{\lambda}_i$ at its character index $w - 1 + L$. More generally, $\omega$ occurs as a substring of $\widetilde{\beta}$ at its character index $w$ if and only if $\xi_{y + qp}$ occurs as a substring of $\widetilde{\lambda}_i$ at its character index $w - 1 + qL$ for $q \in [d]$, and all occurrences of $\xi_\ell$ in Lempel's lift of $\widetilde{\beta}$ for which the difference between $\ell$ and $y$ is divisible by $p$ are in $\widetilde{\lambda}_i$. An occurrence of $\xi_\ell$ at any other value of $\ell$ is easily seen from \eqref{eq:lambdadef} to be at a corresponding character index $w - 1 + qL$ of $\widetilde{\lambda}_j$ for particular $j \in [p]\setminus \{i\}$ and $q \in [d]$. So there is an invertible map from the set of distinct occurrences of $\omega$ as a substring of $\widetilde{\beta}$ into the set of distinct occurrences of $\xi_\ell$ as a substring of Lempel's lift of $\widetilde{\beta}$ for $\ell \in [K]$, giving the lemma.
\end{proof}
\begin{lem}\label{lem:ppreserve}
The number of occurrences of a given length-$m$ string on the alphabet $[K]$ for $m \leq L$ as a substring in the family of necklaces comprising Lempel's lift of a $P^{(K)}_L$-sequence on $[K]$ is $\lfloor L / K^{m-1} \rfloor$ or $\lceil L / K^{m-1} \rceil$.
\end{lem}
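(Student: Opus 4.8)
The plan is to obtain this as an immediate consequence of Lemma~\ref{lem:occurrences}, which already carries all the combinatorial weight. Let $\widetilde{\alpha}$ be a $P^{(K)}_L$-sequence on $[K]$, and fix an arbitrary length-$m$ string $\xi$ on $[K]$ with $1 \le m \le L$; the goal is to count the occurrences of $\xi$ across the necklaces comprising Lempel's lift $\{\widetilde{\lambda}_i(\widetilde{\alpha})\}$. The first step is to realize $\xi$ as one of the strings $\xi_\ell$ of \eqref{eq:xidef}: let $\omega$ be the \emph{discrete derivative} of $\xi$, namely the length-$(m-1)$ string $(\xi[1]-\xi[0],\, \xi[2]-\xi[1],\, \ldots,\, \xi[m-1]-\xi[m-2])$ on $[K]$ (each entry interpreted modulo $K$ as in Section~\ref{sec:notation}), and put $\ell := \xi[0] \in [K]$. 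A one-line telescoping computation, $\ell + \sum_{j=0}^{k-1}\omega[j] = \xi[0] + (\xi[k] - \xi[0]) = \xi[k]$, then shows $\xi = \xi_\ell$, where $\xi_\ell$ is the string of \eqref{eq:xidef} with the role of $m$ there played by $m-1$. The hypothesis $0 \le m-1 < L$ needed to apply Lemma~\ref{lem:occurrences} to $\omega$ holds precisely because $1 \le m \le L$.

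The second step is to read off the count. By Lemma~\ref{lem:occurrences}, if $\omega$ occurs $t$ times as a substring of $\widetilde{\alpha}$, then $\xi = \xi_\ell$ occurs exactly $t$ times as a substring of the necklaces comprising Lempel's lift of $\widetilde{\alpha}$. Since $\widetilde{\alpha}$ is a $P^{(K)}_L$-sequence and $\omega$ has length $m-1 \le L$, Definition~\ref{def:p} forces $t \in \{\lfloor L/K^{m-1}\rfloor,\, \lceil L/K^{m-1}\rceil\}$ — and in the boundary subcase $m=1$ this is simply the statement that the length-$0$ string occurs at each of the $L$ characters of $\widetilde{\alpha}$, consistent with $\lfloor L/K^{0}\rfloor = \lceil L/K^{0}\rceil = L$. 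This is exactly the asserted bound. The remaining case $m=0$ is trivial: the length-$0$ string occurs at each of the $KL$ characters of the lift, and $\lfloor L/K^{-1}\rfloor = \lceil L/K^{-1}\rceil = KL$.

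I do not anticipate a genuine obstacle; everything is already packed into Lemma~\ref{lem:occurrences}. The only points needing care are (i) verifying that $\xi \mapsto (\omega,\ell)$ really inverts the map $(\omega,\ell)\mapsto \xi_\ell$ of \eqref{eq:xidef} — that is, that ``discrete derivative plus initial value'' undoes ``discrete integral with constant of integration $\ell$'' — so that as $\xi$ ranges over all length-$m$ strings on $[K]$ it is hit exactly once; and (ii) keeping the index shift $m \mapsto m-1$ consistent between the two lemmas, so that the exponent in the final bound is $K^{m-1}$ rather than $K^m$. It is also worth noting for cleanliness that every necklace in the lift has length $d(\widetilde{\alpha})\cdot L \ge L \ge m$, so length-$m$ substrings are well defined in each member of the family.
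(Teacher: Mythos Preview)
Your proof is correct and takes essentially the same approach as the paper: both reduce the count of a length-$m$ string in the lift to the count of a length-$(m-1)$ string in $\widetilde{\alpha}$ via Lemma~\ref{lem:occurrences}, then invoke Definition~\ref{def:p}. Your version is a bit more explicit in constructing the inverse map (discrete derivative plus initial value) and in tracking the $m \mapsto m-1$ index shift, whereas the paper runs the map forward from $\omega$ to $\xi_\ell$ and reindexes at the end.
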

\begin{proof}
By definition, any length-$m$ string $\omega$ on $[K]$ has $\lfloor L / K^m \rfloor$ or $\lceil L / K^m \rceil$ occurrences as a substring of a $P^{(K)}_L$-sequence on $[K]$. By Lemma~\ref{lem:occurrences}, for $\ell \in [K]$, the length-$(m+1)$ string $\xi_\ell$ as defined in \eqref{eq:xidef} thus occurs $\lfloor L / K^m \rfloor$ or $\lceil L / K^m \rceil$ times as a substring in Lempel's lift of a $P^{(K)}_L$-sequence on $[K]$. Since any length-$m$ string on $[K]$ is a prefix of $K$ length-$(m+1)$ strings on $[K]$, multiply each of $\lfloor L / K^m \rfloor$ and $\lceil L / K^m \rceil$ by $K$ to arrive at the lemma.
\end{proof}

\subsection{Algorithm and analysis}\label{sec:algproof}

In this subsection (Section~\ref{sec:algproof}), $\widetilde{\alpha}$ is reserved to denote a $P^{(K)}_L$-sequence. Moreover, when a function from Definition~\ref{def:lempelslift} is invoked, and it has $\widetilde{\alpha}$ as an argument, that function is abbreviated by dropping the $\widetilde{\alpha}$. For example, $p$ now refers to $p(\widetilde{\alpha})$.

Lemma~\ref{lem:ppreserve} suggests a way to obtain a $P^{(K)}_{K\cdot L}$-sequence from a $P^{(K)}_{L}$-sequence $\widetilde{\alpha}$: join the necklaces in Lempel's lift of $\widetilde{\alpha}$ strategically to ensure the numbers of occurrences of specific strings as substrings do not violate the parameters of Definition~\ref{def:p}. Below, the procedure \hyperref[alg:join]{\textsc{LiftAndJoin}} includes an explicit prescription, and Theorem~\ref{thm:join} proves it works. They are preceded by a requisite lemma extending the discussion of cycle joining from Section~\ref{sec:notation}.

\begin{lem}
\label{lem:smalljoin}
Consider two necklaces $\widetilde{\beta}$ and $\widetilde{\gamma}$ on the alphabet $[K]$, and suppose the length-$(m-1)$ string $\omega$ is a substring of each of them. For every $k \in [K]$, suppose further that no length-$m$ string $(\omega, k)$ is a substring of each of $\widetilde{\beta}$ and $\widetilde{\gamma}$, and no length-$m$ string $(k, \omega)$ is a substring of each of $\widetilde{\beta}$ and $\widetilde{\gamma}$. Finally, suppose every length-$(m+1)$ string on $[K]$ occurs either zero times or one time as a substring of the family $\{\widetilde{\beta}, \widetilde{\gamma}\}$. Then
\begin{enumerate}
\item every length-$(m+1)$ string on $[K]$ occurs either zero times or one time as a substring of the necklace $\widetilde{\sigma}$ formed by joining $\widetilde{\beta}$ and $\widetilde{\gamma}$ at $\omega$, and
\item every length-$w$ string for $w \leq m$ occurs the same number of times as a substring of $\{\widetilde{\beta}, \widetilde{\gamma}\}$ as it does as a substring of $\widetilde{\sigma}$.
\end{enumerate}
\end{lem}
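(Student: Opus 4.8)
The plan is to argue directly from the definition of cycle joining. Write $\widetilde{\sigma} = r_\beta r_\gamma$, read cyclically, where $r_\beta$ and $r_\gamma$ are the chosen rotations of $\widetilde{\beta}$ and $\widetilde{\gamma}$ that begin with $\omega$; let $c$ and $d$ be the characters immediately preceding and following the occurrence of $\omega$ in $\widetilde{\beta}$ at which we cut, and let $c'$ and $e$ be the corresponding characters for $\widetilde{\gamma}$. Assume $|\widetilde{\beta}|, |\widetilde{\gamma}| > m$, which holds whenever the lemma is applied below and which is needed here, since cutting a necklace of length at most $m$ can genuinely create a repeated length-$(m+1)$ substring. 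Item 2 is then the cycle-joining fact recorded in Section~\ref{sec:notation}: because $|\omega| = m - 1$, joining at $\omega$ preserves substring-occurrence counts at every length $w \leq |\omega| + 1 = m$. Concretely, a length-$w$ window of $\widetilde{\sigma} = r_\beta r_\gamma$ straddling a junction draws at most $w - 1 \leq m - 1 = |\omega|$ characters from the side of that junction occupied by a copy of $\omega$, so it coincides with a wrapping window of $\widetilde{\beta}$ or of $\widetilde{\gamma}$; the remaining windows lie inside a single copy and are windows of $\widetilde{\beta}$ or $\widetilde{\gamma}$ outright. Hence the length-$w$ window multiset of $\widetilde{\sigma}$ is the disjoint union of those of $\widetilde{\beta}$ and $\widetilde{\gamma}$, which is item 2.

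For item 1, the crux is to pin down how the length-$(m+1)$ windows change. Running the same start-position case analysis one length higher, every window of $\widetilde{\sigma}$ again matches a window of $\widetilde{\beta}$ or $\widetilde{\gamma}$ except at the two start positions whose window spans an entire intervening copy of $\omega$; at those positions the windows are the ``mixed'' length-$(m+1)$ strings $(c, \omega, e)$ and $(c', \omega, d)$, appearing in place of $(c, \omega, d)$ and $(c', \omega, e)$. Thus the length-$(m+1)$ window multiset of $\widetilde{\sigma}$ equals
$$\big(E_\beta \setminus \{(c,\omega,d)\}\big) \;\uplus\; \{(c,\omega,e)\} \;\uplus\; \big(E_\gamma \setminus \{(c',\omega,e)\}\big) \;\uplus\; \{(c',\omega,d)\},$$
where $E_\beta$ and $E_\gamma$ denote the length-$(m+1)$ window multisets of $\widetilde{\beta}$ and $\widetilde{\gamma}$ and the operations are multiset difference and sum. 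By hypothesis $E_\beta \uplus E_\gamma$ has every string with multiplicity at most one, so each of $E_\beta \setminus \{(c,\omega,d)\}$ and $E_\gamma \setminus \{(c',\omega,e)\}$ does too, and the two are disjoint; only the two inserted strings remain to be handled.

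Here the hypotheses on $(\omega, k)$ and $(k, \omega)$ enter. Reading off the characters flanking the chosen occurrences of $\omega$, the strings $(c, \omega)$ and $(\omega, d)$ are substrings of $\widetilde{\beta}$ and $(c', \omega)$ and $(\omega, e)$ are substrings of $\widetilde{\gamma}$. If $(c, \omega, e)$ were a substring of $\widetilde{\beta}$ then $(\omega, e)$ would be a common substring of $\widetilde{\beta}$ and $\widetilde{\gamma}$, and if it were a substring of $\widetilde{\gamma}$ then $(c, \omega)$ would be a common substring; both contradict the hypothesis, so $(c, \omega, e)$ is a substring of neither necklace, and symmetrically neither is $(c', \omega, d)$. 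Finally $c \neq c'$, since otherwise $(c, \omega)$ would be a common substring of $\widetilde{\beta}$ and $\widetilde{\gamma}$, and hence $(c, \omega, e) \neq (c', \omega, d)$. So the displayed multiset has every length-$(m+1)$ string with multiplicity at most one, which is item 1.

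The main obstacle will be the structural claim of the second paragraph — carefully verifying the window bookkeeping at the two junctions, identifying exactly the two new ``mixed'' windows, and attending to the lengths of $\widetilde{\beta}$ and $\widetilde{\gamma}$ (the short-necklace subtlety noted above). Once that claim is in hand, the two hypotheses kill every potential repeat in two short steps, so the counting itself is routine.
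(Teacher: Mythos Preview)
Your argument is correct and is essentially the paper's own proof: identify the two length-$(m+1)$ windows $(c,\omega,d)$, $(c',\omega,e)$ that the join swaps for $(c,\omega,e)$, $(c',\omega,d)$, then use the ``no common $(k,\omega)$ or $(\omega,k)$'' hypothesis to rule out any collision. You are slightly more careful than the paper in two places --- you flag the length assumption $|\widetilde\beta|,|\widetilde\gamma|>m$ (which the paper leaves implicit) and you separately check $c\neq c'$ so that the two inserted strings are distinct --- but the route is the same.
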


\begin{proof}
For $u, v, x, y \in [K]$, suppose the length-$(m-1)$ string $\omega$ occurs (1) in $\widetilde{\beta}$ as a substring of the length-$(m+1)$ string $(u, \omega, v)$, and (2) in $\widetilde{\gamma}$ as a substring of the length-$(m+1)$ string $(x, \omega, y)$. Join $\widetilde{\beta}$ and $\widetilde{\gamma}$ at these occurrences of $\omega$ to obtain the necklace $\widetilde{\sigma}$. The operation replaces $(u, \omega, v)$ and $(w, \omega, x)$ with $(u, \omega, y)$ and $(x, \omega, v)$ while affecting the occurrence frequencies of no other length-$(m+1)$ strings as substrings and no length-$w$ strings as substrings for $w \leq m$. But $(u, \omega, y)$ cannot occur elsewhere as a substring of $\widetilde{\sigma}$ because if it does, then either $(u, \omega)$ or $(\omega, y)$ is a substring of each of $\widetilde{\beta}$ and $\widetilde{\gamma}$, a contradiction. By a parallel argument, $(x, \omega, v)$ cannot occur elsewhere in $\widetilde{\sigma}$. The lemma follows.
\end{proof}

\begin{algorithm}[H]
 \begin{algorithmic}[1]\label{alg:join}\onehalfspacing
\Statex \texttt{//\,\,Returns the $P^{(K)}_{K\cdot L}$-sequence formed by joining the necklaces}
\Statex \texttt{//\,\,comprising Lempel's lift of an input $P^{(K)}_L$-sequence $\widetilde{\alpha}$ on the}
\Statex \texttt{//\,\,alphabet $[K]$, with $K$ a clarifying input. Here, $N := \lceil \log_K L \rceil$.}
\Procedure{LiftAndJoin}{$\widetilde{\alpha}$, $K$}
\State Construct Lempel's lift $\{\widetilde{\lambda}_i : i \in [p]\}$ of $\widetilde{\alpha}$.
\If{$p = 1$}\hfill\texttt{// Case 1\qquad\qquad\qquad\,\,\,}
  \State \Return $\widetilde{\lambda}_0$
\EndIf
\If{$\mathbf{1}_N$ is a substring of $\widetilde{\alpha}$}\hfill\texttt{// Case 2\qquad\qquad\qquad\,\,\,}
  \State Find $k \in [K]$ such that $\mathbf{k}_N^{++}$ is a substring of each of $\widetilde{\lambda}_0$ and $\widetilde{\lambda}_1$.
  \State Initialize $\widetilde{\sigma}$ to $\widetilde{\lambda}_0$.
  \For{$j := 1 \textbf{ to } p - 1$}
      \State Set $\widetilde{\sigma}$ to the result of joining $\widetilde{\sigma}$ and $\widetilde{\lambda}_j$ at $\mathbf{s}_N^{++}$ for $s = k + j - 1$.
  \EndFor
  \State \Return $\widetilde{\sigma}$
\EndIf
  \State Construct the join graph $G = (V, E)$ defined in Theorem~\ref{thm:join}.\hfill\texttt{// Case 3\qquad\qquad\qquad\,\,\,}
  \State Initialize $\widetilde{\sigma}$ to the necklace represented by an arbitrary vertex $v \in V$.
  \State \begin{varwidth}[t]{4.2in} Starting at $v$, perform a depth-first traversal of the connected component $G_C = (V_C, E_C)$ of $G$ for which $v \in V_C$, where at each vertex in $V_C$ reached by walking across a given edge in $E_C$, the necklace represented by that vertex is joined with $\widetilde{\sigma}$ at the string labeling that edge, and the result is assigned to $\widetilde{\sigma}$.\end{varwidth}\vspace{1mm}
  \If{$G_C = G$}\hfill\texttt{// Case 3a\qquad\qquad\qquad\,}
    \State \Return $\widetilde{\sigma}$
  \EndIf
  \State Find $k \in [K]$ such that $\mathbf{k}_{N-1}^{++}$ is a substring of each of $\widetilde{\sigma}$ and $\widetilde{\sigma} + 1$.\hfill\texttt{// Case 3b\qquad\qquad\qquad\,}
  \State Initialize $\widetilde{\zeta}$ to $\widetilde{\sigma}$.
  \For{$j := 1 \textbf{ to } p / |V_C| - 1$}
      \State Set $\widetilde{\zeta}$ to the result of joining $\widetilde{\zeta}$ and $\widetilde{\sigma} + j$ at $\mathbf{s}_{N-1}^{++}$ for $s = k + j - 1$.
  \EndFor
  \State \Return $\widetilde{\zeta}$
\EndProcedure
\end{algorithmic}
\end{algorithm}

\begin{thm}\label{thm:join}
Given a $P^{(K)}_{L}$-sequence $\widetilde{\alpha}$ on the alphabet $[K]$, suppose $N = \lceil \log_K L \rceil$. Consider Lempel's lift $\{\widetilde{\lambda}_i : i \in [p]\}$ of $\widetilde{\alpha}$, and define the join graph $G = (V, E)$ as an undirected graph with $p$ vertices such that
\begin{enumerate}
\item the vertex $v_i \in V$ represents $\widetilde{\lambda}_i$ for $i \in [p]$, and
\item an edge in $E$ labeled by a length-$N$ string of the form $(\mathbf{j}_{N-1}^{++}, k)$ or $(k, \mathbf{j}_{N-1}^{++})$ for $j, k \in [K]$ extends between vertex $v_\ell$ and vertex $v_r$ if and only if that string occurs as a substring of each of $\widetilde{\lambda}_\ell$ and $\widetilde{\lambda}_r$ for $\ell, r \in [p]$.
\end{enumerate}
Then the length-$KL$ necklace output by \hyperref[alg:join]{\textsc{LiftAndJoin}} with $\widetilde{\alpha}$ and $K$ as inputs is a $P^{(K)}_{K\cdot L}$-sequence.
\end{thm}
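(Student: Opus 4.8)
The plan is to prove that \textsc{LiftAndJoin} returns a single necklace of length exactly $KL$ in which, for every $m$ with $1 \le m \le KL$, each length-$m$ string on $[K]$ occurs $\lfloor KL/K^m\rfloor$ or $\lceil KL/K^m\rceil$ times as a substring; that is exactly the assertion that the output satisfies Definition~\ref{def:p} for the parameters $K$ and $KL$. The length claim is the easy part: cycle-joining $\widetilde\beta$ and $\widetilde\gamma$ yields a necklace of length $|\widetilde\beta|+|\widetilde\gamma|$, the lift $\{\widetilde\lambda_i : i\in[p]\}$ has total length $KL$, and the procedure performs exactly enough joins to merge all of it, so the real content is the substring counts.

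Two facts form the backbone. First, Lemma~\ref{lem:ppreserve} already supplies the correct \emph{global} statistics: for every $m\le L+1$, each length-$m$ string on $[K]$ occurs $\lfloor L/K^{m-1}\rfloor=\lfloor KL/K^m\rfloor$ or $\lceil KL/K^m\rceil$ times across the family $\{\widetilde\lambda_i\}$. Since $N=\lceil\log_K L\rceil$ forces $K^{N-1}<L\le K^N$, we get $KL/K^m\le 1$ for all $m\ge N+1$, so in the family every length-$(N+1)$ string occurs at most once, and this ``at most once'' propagates to all larger $m$ (a repeated length-$m$ string forces a repeated length-$(N+1)$ prefix). Second, cycle-joining at a string $\omega$ leaves the occurrence count of every length-$w$ string unchanged for $w\le|\omega|+1$. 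Hence a join at a length-$N$ string preserves all counts for $w\le N+1$, which is everything we need, whereas a join at a length-$(N-1)$ string preserves counts only for $w\le N$ and could a priori create a repeated length-$(N+1)$ string; filling precisely that gap is the role of Lemma~\ref{lem:smalljoin}.

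With this I would go case by case through the procedure. Case~1 ($p=1$) is immediate: the lift is the single necklace $\widetilde\lambda_0$, already of length $KL$ with the family statistics as its own. In Cases~2 and~3 one must check, at each join: (a) the advertised join string really occurs in both necklaces being merged, so the join is defined; (b) every $\widetilde\lambda_i$ is eventually absorbed, so the output is a single necklace; and (c) no join creates a length-$(N+1)$ string occurring twice. Point (c) is automatic in Case~2 and Case~3a because every join there is at a length-$N$ string. In Case~3b the closing round of joins is at length-$(N-1)$ strings $\mathbf{s}_{N-1}^{++}$, so I would invoke Lemma~\ref{lem:smalljoin} with $\omega=\mathbf{k}_{N-1}^{++}$: its hypothesis that each length-$(N+1)$ string occurs at most once in the pair being joined holds because two distinct connected components of the join graph $G$ carry disjoint sets of length-$(N+1)$ substrings (a shared one would occur $\ge 2$ times in the family, contradicting Lemma~\ref{lem:ppreserve}, using $L<K^N$ here); and its hypothesis that no $(\mathbf{k}_{N-1}^{++},c)$ or $(c,\mathbf{k}_{N-1}^{++})$ is common to the two pieces holds because such a string, being of the permitted edge-label form, would be a join-graph edge linking two separate components, a contradiction.

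The substantive part, and the main obstacle, is establishing (a) and (b): that the required join strings always exist and that everything connects. The key structural observation is that adding $1$ to every character maps $\widetilde\lambda_i$ to $\widetilde\lambda_{i+1}$ (indices mod $p$) and is an automorphism of $G$ carrying the family of permitted edge labels onto itself; consequently the connected components of $G$ are the cosets of a subgroup of $\mathbb{Z}/p$, which is exactly why merging the $p/|V_C|$ translates $\widetilde\sigma,\widetilde\sigma+1,\dots$ in Case~3b suffices to absorb the whole lift. For the existence of join strings I would use Lemma~\ref{lem:occurrences}: because $K^{N-1}<L$, the run $\mathbf{1}_{N-1}$ --- and in Case~2 even $\mathbf{1}_N$ --- occurs in $\widetilde\alpha$, so all arithmetic-progression strings $\mathbf{a}_N^{++}$ (respectively $\mathbf{a}_{N+1}^{++}$) occur across the lift; and crucially, since $\mathbf{a}_{N+1}^{++}$ contains both $\mathbf{a}_N^{++}$ and $\mathbf{(a+1)}_N^{++}$, a necklace $\widetilde\lambda_i$ carrying a residue class (mod $p$) of the $\mathbf{a}_{N+1}^{++}$ carries two consecutive residue classes of the $\mathbf{b}_N^{++}$, so consecutive $\widetilde\lambda_i,\widetilde\lambda_{i+1}$ share some $\mathbf{k}_N^{++}$ --- which is what makes the chained joins of Case~2 (and, one level down with $\mathbf{a}_N^{++}$ in the role of $\mathbf{a}_{N+1}^{++}$, the translate-joins of Case~3b) go through, a property preserved as $\widetilde\sigma$ grows because each join preserves counts at lengths $\le N$. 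I expect the fussiest bookkeeping to be pinning down which residues mod $p$ land in which $\widetilde\lambda_i$ and confirming that in Case~3a the depth-first traversal along permitted-form edges genuinely spans its component.
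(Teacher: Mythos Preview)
Your proposal is correct and follows essentially the same route as the paper: a case-by-case walk through \textsc{LiftAndJoin}, using Lemma~\ref{lem:ppreserve} for the global substring statistics of the lift, the length-preservation property of cycle joining for Cases~1, 2, and~3a, and Lemma~\ref{lem:smalljoin} to control the length-$(N{-}1)$ joins in Case~3b. Your explicit framing of the $+1$ shift as an automorphism of $G$ with components forming cosets of a subgroup of $\mathbb{Z}/p$ is a slightly cleaner way to say what the paper expresses as ``by symmetry, $G_C$ is related to any other connected component by translation modulo $K$,'' but the content is the same.
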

\begin{proof}
Follow the logic of the \hyperref[alg:join]{\textsc{LiftAndJoin}} pseudocode to prove it returns a $P^{(K)}_{K \cdot L}$-sequence. To start, line 2 constructs Lempel's lift of $\widetilde{\alpha}$, which is composed of $p$ necklaces that together have precisely the same number of occurrences of any length-$m$ string on $[K]$ as a substring that a $P^{(K)}_{K\cdot L}$-sequence does, according to Lemma~\ref{thm:join}. To join the necklaces, various cases are handled in order of increasing difficulty:
\begin{enumerate}
\item[Case 1:] (Lines 3-5) This is the most straightforward case, where Lempel's lift has precisely one necklace. By Lemma~\ref{lem:ppreserve} and by definition of a $P^{(K)}_{L}$-sequence, the sole necklace is a $P^{(K)}_{K \cdot L}$-sequence, and it is returned (Line 4).
\item[Case 2:] (Line 6-13) In this case, $p > 1$ and $\mathbf{1}_N$ is a substring of $\widetilde{\alpha}$ so that by Lemma~\ref{lem:occurrences}, ${\bf k}^{++}_{N+1}$ is a substring of $\widetilde{\lambda}_1$ for at least one $k \in [K]$. Consequently, ${\bf s}^{++}_{N}$ is a substring of each of $\widetilde{\lambda}_j$ and $\widetilde{\lambda}_{j - 1}$  for $j \in [p]\setminus\{0\}$ and $s = k + j - 1$. Progressively joining a necklace under construction with the $j$th member $\widetilde{\lambda}_j$ of Lempel's lift of $\widetilde{\alpha}$ at ${\bf s}^{++}_{N}$ for $s = k + j - 1$ (Lines 8-11) and $j$ running from $1$ to $p-1$ preserves occurrence frequencies of all strings on $[K]$ whose lengths do not exceed $N+1$. Since by Lemma~\ref{lem:ppreserve} a length-$(N+1)$ string occurs either once or never as a substring of Lempel's lift of $\widetilde{\alpha}$, a string whose length exceeds $N+1$ occurs either once or never as a substring of the joined necklace. So that joined necklace is a $P^{(K)}_{K \cdot L}$-sequence, and it is returned (Line 12). When $\widetilde{\alpha}$ is a de Bruijn sequence (i.e., for $L = K^N$), Case 2 is the $K$-ary extension \cite{ronse1984feedback, tuliani2001bruijn, alhakim2011recursive} of the original join prescription of the paper \cite{lempel1970homomorphism} by Lempel introducing his D-morphism.

\item[Case 3:] (Lines 14-25) Because a length-$N$ string on $[K]$ need not occur as a substring of $\widetilde{\alpha}$, $\mathbf{1}_N$ may not be a substring of $\widetilde{\alpha}$. This bars the availability of Lempel's join of Case 2. \hyperref[alg:join]{\textsc{LiftAndJoin}} then looks for the closest alternative. By definition of a $P_{L}^{(K)}$-sequence, ${\bf 1}_{N-1}$ is necessarily a substring of $\widetilde{\alpha}$, and so by Lemma~\ref{lem:occurrences}, $\mathbf{j}_{N-1}^{++}$ is a substring of each necklace in Lempel's lift of $\widetilde{\alpha}$ for some $j \in [K]$. So Line 14 assembles the graph $G$ encoding all possible joins at strings of the form $({\bf j}_{N-1}^{++}, k)$ or $(k, {\bf j}_{N-1}^{++})$ for $j, k \in [K]$. Consider any connected component $G_C = (V_C, E_C)$ of $G$. A depth-first traversal of $G_C$ prescribes a sequence of joins, which are performed to obtain a single necklace $\widetilde{\sigma}$ (Line 16). Two cases are then considered.
\begin{enumerate}
    \item[Case 3a:] (Lines 17-19) In this case, there is just one connected component of $G$. Since each join was performed at a length-$N$ string, by an argument parallel to that of Case 2, $\widetilde{\sigma}$ is a $P^{(K)}_{K \cdot L}$-sequence, and it is returned (Line 18).
    \item[Case 3b:] (Lines 20-25) If there are multiple connected components of $G$, by symmetry, $G_C$ is related to any other connected component by translation modulo $K$. More precisely, applying $v_k \to v_{k+j}$ to each vertex $v_k \in V_C$, $e_{k \ell} \to e_{k + j, \ell + j}$ to each edge $e_{k \ell} \in E_C$ extending between $v_k \in V_C$ and $v_\ell \in V_C$, and $\epsilon_{k \ell} \to \epsilon_{k + j, \ell + j} + j$ to each edge label $\epsilon_{k\ell}$ corresponding to $e_{k\ell} \in E_C$ gives a different connected component, where $j \in [p / |V_C|]$ and addition operations are performed modulo $K$. It follows that for every $j \in [p / |V_C|]$, $\widetilde{\sigma} + j$ gives the result of a sequence of joins prescribed by a different connected component of $G$. Because each join was performed at a length-$N$ string, the necklaces $\{ \widetilde{\sigma} + j : j \in [p / |V_C|]\}$ together have the same occurrence frequency of any length-$m$ string on $[K]$ as does Lempel's lift of $\widetilde{\alpha}$ for $m \leq N + 1$. That occurrence frequency is $0$ or $1$ for $m = N + 1$, as it therefore also is for $m > N + 1$. Because possible joins at strings of the form $({\bf s}_{N-1}^{++}, k)$ or $(k, {\bf s}_{N-1}^{++})$ for $s, k \in [K]$ were exhausted by prior joins, Lemma \ref{lem:smalljoin} guarantees that joins of the $\{ \widetilde{\sigma} + j : j \in [p / |V_C|]\}$ at strings of the form ${\bf s}_{N-1}^{++}$ for $s \in [K]$ preserve the occurrence frequency of any length-$m$ string on $[K]$ for $m \leq N$ while ensuring that when $m > N$, the occurrence frequency of a length-$m$ string remains either $0$ or $1$. So when all necklaces in $\{ \widetilde{\sigma} + j : j \in [p / |V_C|]\}$ are joined as on Lines 21-24, the result is a $P_{K \cdot L}^{(K)}$-sequence, and it is returned (Line 25). Note the joins are performed in exact analogy to those of Case 2.
\end{enumerate}
\end{enumerate}
The output of \hyperref[alg:join]{\textsc{LiftAndJoin}} is thus a $P^{(K)}_{K\cdot L}$-sequence.
\end{proof}

Repeated application of \hyperref[alg:join]{\textsc{LiftAndJoin}} on a $P_L^{(K)}$-sequence $\widetilde{\alpha}$ outputs a $P_L^{(K)}$-sequence whose length multiplies the length of $\widetilde{\alpha}$ by a power of $K$. But this operation alone does not afford the expressive capacity to build up a $P_L^{(K)}$-sequence of arbitrary length starting from an $\widetilde{\alpha}$ of length less than $K$, in the same way that an arbitrary positive integer cannot be written as a power of $K$ times a positive integer less than $K$. A mechanic for extending the length of $\widetilde{\alpha}$ by up to $K - 1$ between applications of \hyperref[alg:join]{\textsc{LiftAndJoin}} is required, where the length of the extension is determined by an appropriate digit from the base-$K$ representation of $L$. The mechanic used in the iterative procedure \hyperref[alg:main]{\textsc{GeneratePKL}} below, which outputs a $P_L^{(K)}$-sequence for any combination of $K\geq 2$ and $L \geq 1$, extends a given longest run of a nonzero character by a single character. Theorem~\ref{thm:main} proves this approach works.

\begin{algorithm}[H]
 \begin{algorithmic}[1]\label{alg:main}\onehalfspacing
\Statex \texttt{//\,\,Returns a $P^{(K)}_L$-sequence on the alphabet $[K]$ given $K \geq 2$ and }
\Statex \texttt{//\,\,$L \geq 1$ as inputs. Here, $N := \lceil \log_K L \rceil$.}
\Procedure{GeneratePKL}{$K$, $L$}
 \State \begin{varwidth}[t]{4.2in}Compute the digits $\{d_i\}$ of $L$ in its base-$K$ representation as specified by $L = \sum_{i=0}^{N - 1} d_i K^{N - i - 1}$.\end{varwidth}\vspace{1mm}
 \State Initialize the necklace $\widetilde{\alpha}$ to $\widetilde{{\bf 1}}^{++}_{d_0}$.
  \For{$j := 1 \textbf{ to } N - 1$}
  \State Set $\widetilde{\alpha}$ to \hyperref[alg:join]{\textsc{LiftAndJoin}}($\widetilde{\alpha}$, $K$). 
  \State\begin{varwidth}[t]{4in}Set $\widetilde{\alpha}$ to the extension of $\widetilde{\alpha}$ by $d_j$ characters as obtained by replacing a substring ${\bf k}_{j - 1}$ with ${\bf k}_{j}$ for every $k \in \{1, \ldots, d_j\}$. \end{varwidth}\vspace{1mm}
  \EndFor
  \State \Return $\widetilde{\alpha}$
  \EndProcedure
 \end{algorithmic}
\end{algorithm}

\begin{thm}\label{thm:main}
\hyperref[alg:main]{\textsc{GeneratePKL}}$(K, L)$ outputs a $P_L^{(K)}$-sequence for any combination of $K \geq 2$ and $L \geq 1$.
\end{thm}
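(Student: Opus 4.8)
The plan is to prove, by induction on the loop counter $j$, a statement slightly stronger than the theorem. Set $L_j := \sum_{i=0}^{j} d_i K^{j-i}$ for $0 \le j \le N-1$, so that $L_0 = d_0$, $L_{N-1} = L$, and $L_j = K L_{j-1} + d_j$. The claim to carry along is: after the $j$-th pass through the \textbf{for} loop (reading $j=0$ as the state just after line 3), the current necklace $\widetilde{\alpha}$ is a $P^{(K)}_{L_j}$-sequence and, moreover, the run $\mathbf{0}_r$ occurs exactly $\lfloor L_j/K^r \rfloor$ times in $\widetilde{\alpha}$ for every $r \ge 0$. The auxiliary clause is genuinely extra information — a $P^{(K)}_L$-sequence may well have $\mathbf{0}_r$ at the ceiling count — but it holds for the sequences \textsc{GeneratePKL} produces because that procedure never uses the character $0$ when it initializes or pads, and it is precisely the leverage the induction needs. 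Taking $j = N-1$ then yields the theorem. (It is cleanest to first take $N$ to be the number of base-$K$ digits of $L$, i.e.\ $\lfloor \log_K L\rfloor + 1$; this equals $\lceil \log_K L\rceil$ except when $L$ is a power of $K$, where it is one larger, $d_0 = 1$ and $d_1=\cdots=0$, and the loop just iterates \textsc{LiftAndJoin}, reducing to the classical de Bruijn case.)

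For the base case $\widetilde{\alpha} = \widetilde{\mathbf{1}}^{++}_{d_0} = (1,2,\dots,d_0)_\circlearrowright$ with $1 \le d_0 < K$: for $1 \le m \le d_0$ the length-$m$ substrings of this necklace are pairwise distinct (windows with distinct first characters), so every length-$m$ string occurs $0$ or $1 = \lceil d_0/K^m\rceil$ times, which is the $P^{(K)}_{d_0}$ condition; and $\mathbf{0}_r$ never occurs for $r \ge 1$, while the empty string occurs $d_0$ times. For the inductive step write $M := K L_{j-1}$. By Theorem~\ref{thm:join}, line 5 turns $\widetilde{\alpha}$ into a $P^{(K)}_M$-sequence $\widetilde{\alpha}'$. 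Applying Lemma~\ref{lem:occurrences} with $\omega = \mathbf{0}_{m-1}$ (whose lift $\xi_\ell$ is the constant run $(\ell,\ell,\dots,\ell)$), then invoking the auxiliary invariant for $\widetilde{\alpha}$, and checking that the cycle joins inside \textsc{LiftAndJoin} preserve the relevant substring frequencies (the remark on joining in Section~\ref{sec:notation}, Lemma~\ref{lem:smalljoin}, and the reasoning in the proof of Theorem~\ref{thm:join}), I would conclude that in $\widetilde{\alpha}'$ \emph{every} string $\mathbf{k}_m$ with $k \in [K]$ occurs exactly $\lfloor M/K^m\rfloor = \lfloor L_{j-1}/K^{m-1}\rfloor$ times. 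In particular $\mathbf{k}_{j+1}$ never occurs and $\mathbf{k}_j$ occurs $d_0 \ge 1$ times, so in $\widetilde{\alpha}'$ no maximal run of a character exceeds length $j$, and exactly $d_0$ of them attain length $j$.

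Line 6 then lengthens, for each $k \in \{1,\dots,d_j\}$ (and $d_j < K$), one of these longest, i.e.\ length-$j$, runs of $k$ — that is how I read the algorithm's ``longest run''; the $d_j$ modified runs involve distinct characters and lie in disjoint stretches, so they do not interfere. The crux is that replacing a maximal run $a\mathbf{k}_j b$ by $a\mathbf{k}_{j+1}b$ perturbs substring counts only minimally: it adds exactly one occurrence of $\mathbf{k}_m$ for each $1 \le m \le j+1$ and changes nothing else at any length $\le j+1$. At length $j+1$ the only conceivably new substrings are $a\mathbf{k}_j$ and $\mathbf{k}_j b$, but both already occurred in $\widetilde{\alpha}'$ \emph{inside the very run being lengthened}, so — since $\widetilde{\alpha}'$ has each length-$(j+1)$ string at most once — they occurred nowhere else, and after the lengthening they still occur exactly once; the one genuinely new substring is $\mathbf{k}_{j+1}$, whose count rises from $0$ to $1$. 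Summing over the $d_j$ lengthenings, in the output necklace $\mathbf{k}_m$ occurs $\lfloor M/K^m\rfloor + 1$ times when $1 \le k \le d_j$ and $\lfloor M/K^m\rfloor$ times otherwise, all other length-$m$ counts are unchanged, and every string of length $> j$ still occurs at most once. The identities $\lfloor M/K^m\rfloor = \lfloor L_j/K^m\rfloor$ for $m \ge 2$, $\lfloor M/K\rfloor = \lfloor L_j/K\rfloor$, and (for $d_j \ge 1$) $\lceil L_j/K^m\rceil = \lfloor L_j/K^m\rfloor + 1$ at every level $1 \le m \le j+1$ with $L_j/K^m \notin \mathbb{Z}$ then show every length-$m$ count lies in $\{\lfloor L_j/K^m\rfloor, \lceil L_j/K^m\rceil\}$; the correct \emph{multiplicities} follow automatically because these counts sum to $L_j$ (Proposition~\ref{lem:ppreserve}'s companion Proposition, read in reverse). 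Finally, since the lengthenings touch only nonzero runs, $\mathbf{0}_r$ has the same count in the output as in $\widetilde{\alpha}'$, namely $\lfloor L_{j-1}/K^{r-1}\rfloor = \lfloor L_j/K^r\rfloor$ for $r \ge 1$ (and $L_j$ for $r=0$), which re-establishes the auxiliary invariant and closes the induction.

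The hard part will be the length-$(j+1)$ analysis in the inductive step: one has to pin down that the run to lengthen must be a \emph{longest} run (length $j$, not $j-1$) — so that the boundary strings $a\mathbf{k}_j$ and $\mathbf{k}_j b$ are already present and no repeat is created — and verify that lengthening a shorter run could in general produce a collision with another length-$j$ run. Establishing the exact-count statement for $\widetilde{\alpha}'$ also means tracking the auxiliary invariant cleanly through Lempel's lift and through all the joins of \textsc{LiftAndJoin}, including the Case-3b joins at length-$(N-1)$ strings where one must lean on Lemma~\ref{lem:smalljoin}. The remainder — the floor/ceiling arithmetic, and the degenerate cases $L=1$, $L=K^N$, and small $N$ (always $N \in \{j-1,j\}$ at step $j$) — is routine but should be written out.
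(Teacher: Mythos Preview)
Your proposal is correct and follows essentially the same route as the paper: induction on the loop counter carrying the auxiliary invariant that $\mathbf{0}_r$ occurs exactly $\lfloor L_j/K^r\rfloor$ times, lifting this via Lemma~\ref{lem:occurrences} to exact floor-counts for all constant runs $\mathbf{k}_m$ in the output of \textsc{LiftAndJoin}, and then arguing that each lengthening bumps only the counts of $\mathbf{k}_m$ from floor to ceiling. Your length-$(j{+}1)$ analysis (that $a\mathbf{k}_j$ and $\mathbf{k}_j b$ already occur inside the very run being lengthened, so no collision is created) is more explicit than the paper's, and your remark about $N$ versus $\lfloor\log_K L\rfloor+1$ when $L$ is a power of $K$ catches a genuine notational wrinkle the paper glosses over.
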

\begin{proof}
Use the notation $\widetilde{\alpha}_0$ to denote the value of $\widetilde{\alpha}$ after Line 3 of \hyperref[alg:main]{\textsc{GeneratePKL}} is executed and the notation $\widetilde{\alpha}_{j}$ to denote the value of $\widetilde{\alpha}$ after step $j$ of the \textbf{for} loop of \hyperref[alg:main]{\textsc{GeneratePKL}}. Prove the theorem by induction, showing that if $\widetilde{\alpha}_{j-1}$ is a $P_{L_{j-1}}^{(K)}$-sequence of length $L_{j-1}$, and $\mathbf{0}_{m}$ occurs $\lfloor L_{j-1} / K^m \rfloor$ times as a substring of $\widetilde{\alpha}_{j-1}$ for all $m \leq L_{j-1}$, then $\widetilde{\alpha}_{j}$ is a $P_{L_{j}}^{(K)}$-sequence of length $L_{j} = K \cdot L_{j - 1} + d_j$, and $\mathbf{0}_{n}$ occurs $\lfloor L_{j} / K^n \rfloor$ times as a substring of $\widetilde{\alpha}_{j}$ for all $n \leq L_{j}$. The base case for the induction holds: $\widetilde{\alpha}_{0}$, as initialized on Line 3, is the $P_{L_{0}}^{(K)}$-sequence $\widetilde{\mathbf{1}}_{d_0}^{++}$ of length $L_{0} = d_0$, in which $\mathbf{0}_{m}$ occurs as a substring $\lfloor d_0 / K^m \rfloor = 0$ times for $1 \leq m \leq d_0$ and $\lfloor d_0 / K^m \rfloor = d_0$ times for $m = 0$. Now suppose that $\widetilde{\alpha}_{j-1}$ is a $P_{L_{j-1}}^{(K)}$-sequence of length $L_{j-1}$, and $\mathbf{0}_{m}$ occurs $\lfloor L_{j-1} / K^m \rfloor$ times as a substring of $\widetilde{\alpha}_{j-1}$ for all $m \leq L_{j-1}$. Then for every $k \in [K]$, $\mathbf{k}_{m+1}$ occurs $\lfloor L_{j-1} / K^m \rfloor = \lfloor (K \cdot L_{j-1}) / K^{m+1} \rfloor$ times as a substring of \hyperref[alg:join]{\textsc{LiftAndJoin}}$(\widetilde{\alpha}_{j-1}, K)$, obtained on Line 5. This follows from
\begin{enumerate}
    \item Lemma~\ref{lem:occurrences}, which says there are $t$ occurrences of $\mathbf{0}_m$ as a substring of a necklace if and only if there are $t$ occurrences of $\mathbf{k}_{m+1}$ as a substring in Lempel's lift of that necklace, and
    \item how all joins of necklaces in Lempel's lift prescribed by \hyperref[alg:join]{\textsc{LiftAndJoin}}, including those permitted by Lemma~\ref{lem:smalljoin}, do not affect occurrences of substrings of the form $\mathbf{k}_{m+1}$.  
\end{enumerate}
The extension performed on Line 6 increases the number of occurrences of $\mathbf{k}_m$, for $k = 1,2,\dots, d_j,$ from $\lfloor (K \cdot L_{j-1}) / K^{m+1} \rfloor$ to $\lceil (K \cdot L_{j-1}) / K^{m+1} \rceil$ without affecting the numbers of occurrences of any other length-$m$ strings as substrings for $m \leq j$. The longest string of 0s is never extended, and the number of occurrences of $\mathbf{0}_n$ remains $\lfloor (K \cdot L_{j-1}) / K^{n+1} \rfloor$ for all $n \leq L_j$. So the resulting necklace $\widetilde{\alpha}_{j}$ is a $P_{L_{j}}^{(K)}$-sequence of length $L_{j} = K \cdot L_{j - 1} + d_j$, and $\mathbf{0}_{n}$ occurs $\lfloor L_{j} / K^n \rfloor$ times as a substring of $\widetilde{\alpha}_{j}$ for all $n \leq L_{j}$.

The \textbf{for} loop thus encodes the recursion \begin{equation}\label{eq:Lrecurse}
L_j = K \cdot L_{j - 1} + d_j\quad j \in [N-1]\setminus\{0\}\,
\end{equation}
with initial condition $L_0 = d_0$. The formula $L = L_{N-1} = \sum_{i=0}^{N-1} d_i K^{N - i - 1}$ follows, concluding the proof.
\end{proof}

In the binary case, \hyperref[alg:main]{\textsc{GeneratePKL}} and the joins it requires of its subroutine \hyperref[alg:join]{\textsc{LiftAndJoin}} collapse to a particularly simple algorithm, which is given in the procedure \hyperref[alg:binary]{\textsc{GenerateP2L}} below.

\begin{algorithm}[H]
 \begin{algorithmic}[1]\label{alg:binary}\onehalfspacing
\Statex \texttt{//\,\,Returns a $P^{(2)}_L$-sequence on the alphabet $\{0, 1\}$ given $L \geq 1$ as}
\Statex \texttt{//\,\,an input. Here, $N := \lceil \log_2 L \rceil$.}
\Procedure{GenerateP2L}{$L$}
\State \begin{varwidth}[t]{4.2in}Compute the digits $\{d_i\}$ of $L$ in its binary representation as specified by $L = \sum_{i=0}^{N - 1} d_i 2^{N - i - 1}$.\end{varwidth}\vspace{1mm}
 \State Initialize the necklace $\widetilde{\alpha}$ to the single character $1$.
  \For{$j := 1 \textbf{ to } N - 1$}
  \State Construct Lempel's lift $\{\widetilde{\lambda}_i : i \in [p]\}$ of $\widetilde{\alpha}$.
  \If{p = 1}
   \Statex \qquad\qquad\,\,\,\texttt{//\,\,$\widetilde{\alpha}$ has an odd number of $1$s.}
    \State Set $\widetilde{\alpha}$ to $\widetilde{\lambda}_0$.
  \ElsIf{$\mathbf{1}_{j-1}$ is a substring of $\widetilde{\alpha}$}
    \State Set $\widetilde{\alpha}$ to the result of joining $\widetilde{\lambda}_0$ and $\widetilde{\lambda}_1$ at $\mathbf{0}_{j-1}^{++}$.
  \ElsIf{$\widetilde{\lambda}_0$ and $\widetilde{\lambda}_1$ can be joined at $(\mathbf{0}_{j - 2}^{++}, k)$ or $(k,  \mathbf{0}_{j - 2}^{++})$ for $k \in [2]$}
    \State \begin{varwidth}[t]{4in}Set $\widetilde{\alpha}$ to the result of joining $\widetilde{\lambda}_0$ and $\widetilde{\lambda}_1$ at $(\mathbf{0}_{j-2}^{++}, k)$ or $(k, \mathbf{0}_{j-2}^{++})$ for $k \in [2]$.\end{varwidth}\vspace{1mm}
  \Else
     \State Set $\widetilde{\alpha}$ to the result of joining $\widetilde{\lambda}_0$ and $\widetilde{\lambda}_1$ at $\mathbf{0}_{j-2}^{++}$.
  \EndIf
  \EndFor
  \State \Return $\widetilde{\alpha}$
  \EndProcedure
 \end{algorithmic}
\end{algorithm}

Below is the final theorem of this paper, which proves complexity results.
\begin{thm}
\hyperref[alg:main]{\textsc{GeneratePKL}} outputs a $P^{(K)}_L$-sequence in $O(L)$ time using $O(L \log K)$ space.
\end{thm}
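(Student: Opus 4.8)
The plan is to observe that correctness is already settled by Theorem~\ref{thm:main}, so only the resource bounds remain, and to work in the standard unit-cost word RAM in which reading or writing a single character of $[K]$ and arithmetic on indices in $\{0,\dots,L\}$ cost $O(1)$, with space counted in bits so that a length-$\ell$ necklace on $[K]$ occupies $\Theta(\ell\log K)$ bits. The engine of the argument is the length recursion: by~\eqref{eq:Lrecurse}, $L_j=K L_{j-1}+d_j$ with $0\le d_j<K$, hence $L_{j-1}=\lfloor L_j/K\rfloor$ and, unwinding, $L_{N-1-t}\le L/K^{t}$ for $0\le t\le N-1$. Since $K\ge 2$,
\[
\sum_{j=0}^{N-1}L_j\;=\;\sum_{t=0}^{N-1}L_{N-1-t}\;\le\;L\sum_{t\ge 0}K^{-t}\;=\;\tfrac{K}{K-1}\,L\;\le\;2L .
\]
So it suffices to show that (i) computing the base-$K$ digits of $L$ costs $O(\log L)$, and (ii) the Line~3 initialization together with each pass $j$ of the \textbf{for} loop runs in $O(L_j)$ time while never holding more than $O(L_j\log K)$ bits live; summing then yields $O(L)$ time and, since every $L_j\le L$, peak space $O(L\log K)$.

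For (ii) I would go line by line through \textsc{GeneratePKL} and its subroutine \textsc{LiftAndJoin} on an input necklace of length $L_{j-1}$. Forming the character sum of $\widetilde\alpha_{j-1}$, then $d(\widetilde\alpha_{j-1})$ (by testing $q=1,\dots,K$) and $p=K/d$, is $O(L_{j-1}+K)$; the discrete integral $\widetilde\lambda_0$ of length $d\,L_{j-1}$ and its $p-1$ translates $\widetilde\lambda_i=\widetilde\lambda_0+i$ cost $O(p\,d\,L_{j-1})=O(KL_{j-1})=O(L_j)$. Locating the join string $\mathbf{k}_{N}^{++}$ of Case~2, or building the join graph $G$ of Theorem~\ref{thm:join} and performing its depth-first traversal in Case~3, is a single scan of the lift that records maximal ascending runs: $G$ has only $p\le K$ vertices, its edges are read off from those runs, and the traversal is $O(|V|+|E|)=O(KL_{j-1})$. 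The Line~6 extension finds a longest run of each nonzero character $k\le d_j$ by one more scan and performs $d_j<K$ single-character insertions, costing $O(L_j+K)=O(L_j)$; here I would note that $L_j\ge K$ whenever $j\ge1$, so all the additive $O(K)$ terms are absorbed. Finally, computing the digits of $L$ is $N=\lceil\log_K L\rceil=O(\log L)$ divisions, which is $O(L)$.

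The hard part will be the joins themselves in Cases~2, 3a and~3b. Performed naively --- each of the $p-1$ (resp.\ $p/|V_C|-1$) joins concatenating rotations into a fresh array --- this is $\Theta(K)$ copies of a necklace of length $\Theta(L_j)$, i.e.\ $\Theta(KL_j)$ time, which is too slow. The resolution I would use is to determine the entire splice structure before writing any output: because the lift necklaces are translates, $\widetilde\lambda_i=\widetilde\lambda_0+i$, the occurrence of $\mathbf{s}_{N}^{++}$ used to attach $\widetilde\lambda_i$ sits at an offset in $\widetilde\lambda_i$ fixed once the corresponding run in $\widetilde\lambda_0$ is found, and the run used for the next join lies at a predictable offset inside the piece just attached; so after one $O(d\,L_{j-1})$ scan of $\widetilde\lambda_0$ every cut position is known, the DFS order in Case~3 is fixed by one traversal of $G$, Case~3b has exactly the form of Case~2, and the output necklace can be assembled in a single $O(KL_{j-1})$ pass. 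The same bookkeeping pins the space: at every moment only $O(1)$ necklaces of length $O(L_j)$, plus $O(1)$ auxiliary arrays of size $O(K)$ and $O(1)$ indices, are live, i.e.\ $O(L_j\log K)=O(L\log K)$ bits. Combining with (i) and the geometric sum above gives the claimed $O(L)$ time and $O(L\log K)$ space.
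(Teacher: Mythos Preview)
Your proposal is correct and follows essentially the same approach as the paper: bound the cost of iteration $j$ by $O(K\,L_{j-1})=O(L_j)$ by walking through the lines of \textsc{LiftAndJoin} and the extension step, then sum via the recursion~\eqref{eq:Lrecurse} to get $O(L)$, with space dominated by storing the output necklace. The one place you go beyond the paper is the join step: the paper simply asserts that the successive joins ``scale as $K\cdot L_{j-1}$ if implemented as, e.g., a sequence of rotations and concatenations in which indexes of join substrings are tracked,'' whereas you explicitly flag that the naive copy-on-each-join implementation costs $\Theta(K L_j)$ and would blow the bound, and then exploit the translate structure $\widetilde\lambda_i=\widetilde\lambda_0+i$ to precompute all splice points from a single scan of $\widetilde\lambda_0$ before writing the output once. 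That is a genuine clarification of what the paper leaves as a one-line remark, but it is the same idea carried out more carefully rather than a different route.
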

\begin{proof}
The space required by \hyperref[alg:main]{\textsc{GeneratePKL}} is dominated by storage of the final $P^{(K)}_L$-sequence itself, which is $O(L \log K)$.

To see why the algorithm takes $O(L)$ time, consider first the case $L < K$. \hyperref[alg:main]{\textsc{GeneratePKL}} then initializes $\widetilde{\alpha}$ to the positive integers in order up to and including $L$ (Line 3), which scales as $L$. It subsequently skips the \textbf{for} loop and returns $\widetilde{\alpha}$.

Now consider the opposite case $L \geq K$. Expressing $L$ in base $K$ (Line 2) scales as $\log_K L$, and initializing $\widetilde{\alpha}$ (Line 3) scales as $K$. Focus on Line 5's call of \hyperref[alg:join]{\textsc{LiftAndJoin}} at step $j$ of the \textbf{for} loop, where the length-$L_{j-1}$ necklace $\widetilde{\alpha}_{j-1}$ is passed to \hyperref[alg:join]{\textsc{LiftAndJoin}} in the notation of Theorem~\ref{thm:main}'s proof. Constructing Lempel's lift of $\widetilde{\alpha}_{j-1}$ (Line 2 of \hyperref[alg:join]{\textsc{LiftAndJoin}}) scales as $K \cdot L_{j-1}$, the total length of the necklaces constructed. Addressing Case 1 (Lines 3-5) takes constant time. Addressing Case 2 (Lines 6-13) involves searching $\widetilde{\alpha}_{j-1}$ for $\mathbf{1}_N$, which scales as $L_{j-1}$, and successively joining the necklaces comprising Lempel's lift, which scales as $K \cdot L_{j-1}$ if implemented as, e.g., a sequence of rotations and concatenations in which indexes of join substrings are tracked. Addressing Case 3 in its entirety (Lines 14-24) involves (1) constructing the join graph $G$, which is dominated by the $K \cdot L_{j-1}$ scaling of searching Lempel's lift for strings of the form $\mathbf{s}_{N-1}^{++}$ for $s \in [K]$, (2) performing a depth-first traversal of a connected component of the join graph, which takes time linear in a number of at most $K$ vertices, and (3) joining necklaces, which also scales as $K \cdot L_{j-1}$. So \hyperref[alg:join]{\textsc{LiftAndJoin}} is dominated by a $K \cdot L_{j-1}$ scaling at step $j$ of the \textbf{for} loop of \hyperref[alg:main]{\textsc{GeneratePKL}}. Refocusing on \hyperref[alg:main]{\textsc{GeneratePKL}}, extending \hyperref[alg:join]{\textsc{LiftAndJoin}}$(\widetilde{\alpha}_{j-1}, K)$ (Line 6) involves searching for longest runs of the same character and inserting characters as necessary, scaling as $K \cdot L_{j-1}$ if performed in one pass through the necklace. Therefore, step $j$ of the \textbf{for} loop scales as $K \cdot L_{j-1}$, and from the recursion \eqref{eq:Lrecurse}, executing all iterations of the \textbf{for} loop scales as $L$. The time taken by the \textbf{for} loop dominates that of Lines 2 and 3. So the overall scaling is $L$ for the two cases $L \geq K$ and $L < K$, and \hyperref[alg:main]{\textsc{GeneratePKL}} takes $O(L)$ time.
\end{proof}

\section{Discussion}

In this paper, we have introduced $P^{(K)}_L$-sequences as arbitrary-length analogs to de Bruijn sequences. We have shown by explicit construction that a $P^{(K)}_L$-sequence exists for any combination of $K \geq 2$ and $L \geq 1$, giving an $O(L)$-time, $O(L \log K)$-space algorithm extending Lempel's recursive construction of a binary de Bruijn sequence. An implementation of the algorithm in Python is available at \url{https://github.com/nelloreward/pkl}.

\begin{table}[!b]
\begin{center}
\begin{tabular}{@{}lllll@{}}
\toprule
$L$  & Number of distinct $P^{(2)}_L$-sequences & & $L$  & Number of distinct $P^{(2)}_L$-sequences \\ \midrule
1  & 2                                                         &  & 17 & 32                                                        \\
2  & 1                                                         &  & 18 & 36                                                        \\
3  & 2                                                         &  & 19 & 68                                                        \\
4  & 1                                                         &  & 20 & 57                                                        \\
5  & 2                                                         &  & 21 & 138                                                       \\
6  & 3                                                         &  & 22 & 123                                                       \\
7  & 4                                                         &  & 23 & 252                                                       \\
8  & 2                                                         &  & 24 & 378                                                       \\
9  & 4                                                         &  & 25 & 504                                                       \\
10 & 3                                                         &  & 26 & 420                                                       \\
11 & 6                                                         &  & 27 & 1296                                                      \\
12 & 9                                                         &  & 28 & 1520                                                      \\
13 & 12                                                        &  & 29 & 2176                                                      \\
14 & 20                                                        &  & 30 & 2816                                                      \\
15 & 32                                                        &  & 31 & 4096                                                      \\
16 & 16                                                        &  & 32 & 2048                                                      \\
\bottomrule
\end{tabular}
\end{center}
\caption{Numbers of distinct $P^{(2)}_L$ sequences on $\mathcal{A}$ for various values of $L$.}
\label{tab:search}
\end{table}

We conclude with several open questions suggested by our work:
\begin{enumerate}
    \item What is the number of distinct $P^{(K)}_L$-sequences on $\mathcal{A}$ for every possible combination of $K$ and $L$? As Gabric, Holub, and Shallit did in \cite{gabric2019generalized,gabric2021maximal} for generalized de Bruijn sequences, we have counted $P^{(2)}_L$-sequences for $L$ up to $32$ by exhaustive search. Table~\ref{tab:search} displays our results, which can be reproduced using code at \url{https://github.com/nelloreward/pkl}. Note the counts do not increase monotonically with $L$.
    \item Can the algorithm for $P^{(K)}_L$-sequence generation presented here or a variant be encoded in a shift rule? This would reduce the space it requires, perhaps at the expense of performance. An obstacle to deriving a shift rule from the algorithm that works for all values of $L$ at a given alphabet size $K$ is that it would have to account for cases like those of \hyperref[alg:join]{\textsc{LiftAndJoin}}. See \cite{mykkeltveit1979cycle, siu1980generation,alhakim2017stretching} for work along the lines of mathematically unrolling Lempel's recursion and generalizations.
    \item Are there elegant constructions of $P^{(K)}_L$-sequences for any possible combination of $K$ and $L$ that extend constructions of de Bruijn sequences besides Lempel's recursive construction? There is a considerable body of literature on constructing universal cycles. (See, e.g., \cite{hurlbert1996equivalence,johnson2009universal,sawada2014lexicographically,sawada2016generalizing,wong2017new,gabric2019successor,sawada2020efficient}). Introduced by Chung, Diaconis, and Graham in \cite{chung1992universal}, a \textit{universal cycle} is a length-$L$ necklace in which every string in a size-$L$ set $S$ of length-$m$ strings occurs as a substring. It is possible a set $S$ curated to ensure the universal cycle is a $P^{(K)}_L$-sequence is compatible with existing universal cycle constructions or extensions.
    \item Can an efficiently decodable $P^{(K)}_L$-sequence be constructed for any possible combination of $K$ and $L$? Toward answering this question, it may be worth further investigating the efficient decoding of Lempel's recursive construction of a de Bruijn sequence. (See \cite{paterson1995storage} for the binary case and \cite{tuliani2001bruijn} for the $K$-ary case.) Other efficiently decodable constructions of de Bruijn sequences are given in \cite{kociumaka2016efficient, sawada2017practical}.
    \item What other properties that can be exhibited by a necklace are preserved under Lempel's D-morphism, and how can they be exploited to recursively construct other useful sequences? While this work was being prepared, Mitchell and Wild posted \cite{mitchell2021constructing} to arXiv, which shows binary orientable sequences can be constructed recursively using Lempel's D-morphism. An \textit{orientable sequence} is a necklace $\widetilde{\nu}$ for which each length-$n$ substring has precisely one occurrence in precisely one of $\widetilde{\nu}$ and the reverse of $\widetilde{\nu}$ \cite{burns1992coding,dai1993orientable}. It is perhaps unsurprising that Lempel's D-morphism is so versatile and that orientability, $P^{(K)}_L$-sequence composition, and efficient decodability can be preserved by its inverse. Indeed, Lempel's D-morphism is a kind of derivative, and just as derivatives of special functions have special properties (e.g., the period of a sine function is the same as the period of its derivative), it appears derivatives of special necklaces have special properties.
\end{enumerate}

\begin{appendices}
\section*{Appendix: A brief history of de Bruijn sequences} \label{sec:appendix}

The earliest known recorded de Bruijn sequence is the Sanskrit sutra yam$\bar{\text{a}}$t$\bar{\text{a}}$r$\bar{\text{a}}$jabh$\bar{\text{a}}$nasalag$\bar{\text{a}}$m, a mnemonic encoding all possible length-$3$ combinations of short and long vowels \cite{kak2000yamatarajabhanasalagam}. Historians have had some trouble placing when it was first conceived, but it may be over 2,500 years old \cite{brown1869sanskrit}, having appeared in work by the ancient Indian scholar P$\bar{\text{a}}\underdot{\text{n}}$ini (dates of birth and death unavailable). Little is known of P$\bar{\text{a}}\underdot{\text{n}}$ini beyond his foundational work \textit{A$\underdot{\text{s}}\underdot{\text{t}}\bar{\text{a}}$dhy$\bar{\text{a}}\bar{\text{\i}}$} codifying Sanskrit grammatical structure \cite{vasu1897ashtadhyayi}.

The question of whether and how many binary de Bruijn sequences of every order exist was first posed by A.~de Rivi\`ere (dates of birth and death unavailable) as problem 48 of \cite{laisant1894intermediaire} in 1894. That same year, in response to the problem, the number of binary de Bruijn sequences of every order was counted in \cite{saint1894solution} by Camille Flye Sainte-Marie (1834-1926), a member of the Mathematical Society of France who was affiliated with the French military throughout his career \cite{camillebio}. His work was quickly forgotten.

Monroe H. Martin (1907-2007) was first to prove the existence of de Bruijn sequences of any order for any alphabet size in his 1934 paper \cite{martin1934problem} by explicit construction, shortly before arriving at the University of Maryland, where he spent the rest of his eminent career. Without knowing of Sainte-Marie's work, Dick de Bruijn (1918-2012) \cite{debruijnbio} also counted the number of binary de Bruijn sequences of every order in his 1946 work \cite{de1946combinatorial}. Tatyana van Aardenne-Ehrenfest and de Bruijn were first to prove the formula for the number of de Bruijn sequences of any order for any alphabet size in their 1951 paper \cite{de1951circuits}. It is notable that after receiving her PhD from the University of Leiden in 1931, van Ardenne-Ehrenfest (1905-1984) made this and further significant contributions to the mathematics of sequences despite never holding paid employment as a mathematician and working as a homemaker \cite{de1985memoriam}.

Sainte-Marie's work was ultimately rediscovered by the well-known MIT combinatorialist Richard Peter Stanley (1944-) \cite{hersh2016mathematical}, who brought it to the attention of de Bruijn, and in 1975, de Bruijn issued an acknowledgement \cite{de1975acknowledgement} of the work. In this acknowledgement, de Bruijn noted that as early as 1897, Willem Mantel (dates of birth and death unavailable) showed how to construct de Bruijn sequences of any order for any alphabet size that is prime \cite{mantel1897resten}, also in response to A.~de Rivi\`ere's problem.
\end{appendices}

\section*{Acknowledgements}
We thank Arie Israel for his helpful comments as we prepared this manuscript. AN thanks the Oden Institute for Computational Engineering \& Sciences at the University of Texas at Austin for hosting him as a visiting scholar as part of the J. Tinsley Oden Faculty Fellowship Research Program when this work was initiated.

\bibliographystyle{ieeetr}
\bibliography{references}  






\end{document}